\definecolor{dblue}{rgb}{0.09,0.32,0.44}
\def\clap#1{\hbox to 0pt{\hss#1\hss}}
\newtheorem{thm}{Theorem}
\newtheorem{cor}[thm]{Corollary}
\newtheorem{lem}[thm]{Lemma}
\newtheorem{prop}[thm]{Proposition}
\newtheorem*{thm*}{Theorem}
\theoremstyle{definition}
\newtheorem{dfn}[thm]{Definition}
\theoremstyle{remark}
\newtheorem{rem}[thm]{Remark}
\newcommand{\abs}[1]{\left\vert#1\right\vert}
\newcommand{\IP}[1]{\left<#1\right>}
\newcommand{\set}[1]{\left\{#1\right\}}
\newcommand{\sr}[1]{\left(#1\right)}
\newcommand{\Integer}{\mathbb{Z}}
\newcommand{\Z}{\Integer}
\newcommand{\N}{\mathbb{N}}
\newcommand{\R}{\mathbb{R}}
\DeclareMathOperator{\E}{\mathbb{E}}     
\DeclareMathOperator{\dbtv}{d_{\textrm{BTV}}}
\renewcommand{\Pr}{}
\let\Pr\relax
\DeclareMathOperator{\Pr}{\mathbb{P}}
\newcommand{\1}[1]{\mathbf{1}_{\set{ #1 } }}
\newcommand{\ignore}[1]{ }
\newcommand{\p}{\partial}
\newcommand{\dist}{\mathrm{dist}}
\newcommand{\Aa}{\mathcal{A}}
\newcommand{\Bb}{\mathcal{B}}
\newcommand{\Ee}{\mathcal{E}}
\newcommand{\Dd}{\mathcal{D}}
\newcommand{\Cc}{\mathcal{C}}
\newcommand{\har}{\mathrm{har}}
\newcommand{\supp}{\mathrm{supp}}
\renewcommand{\o}{\omega}
\renewcommand{\O}{\Omega}
\begin{document}

\title[Minimal harmonic functions]{Minimal growth harmonic functions on lamplighter groups}

\author[Benjamini]{Itai Benjamini}
\author[Duminil-Copin]{Hugo Duminil-Copin}
\author[Kozma]{Gady Kozma}
\author[Yadin]{Ariel Yadin}
\address{IB, GK: Department of mathematics, the Weizmann institute of
  science.}
\email{itai.benjamini@weizmann.ac.il, gady.kozma@weizmann.ac.il}
\address{HDC: Universit\'e de Gen\`eve,
Gen\`eve, Switzerland}
\email{hugo.duminil@unige.ch}
\address{AY: Ben-Gurion University of the Negev,
Beer Sheva, Israel}
\email{yadina@bgu.ac.il}

\begin{abstract}
We study the minimal possible growth of harmonic functions on
lamplighters. We find that $(\Z/2)\wr \Z$ has no sublinear harmonic
functions, $(\Z/2)\wr \Z^2$ has no sublogarithmic harmonic functions,
and neither has the repeated wreath product
$(\dotsb(\Z/2\wr\Z^2)\wr\Z^2)\wr\dotsb\wr\Z^2$. These results have
implications on attempts to quantify the Derriennic-Kaimanovich-Vershik
theorem.
\end{abstract}

\maketitle

\section{Introduction}

The celebrated Derriennic-Kaimanovich-Vershik theorem 
\cite{D80, KV83}
states that for any finitely generated group $G$ and any set of
generators $S$, the Cayley graph of $G$ with respect to $S$ has
bounded non-constant harmonic functions if and only if the entropy of
the position of a random walk on the same Cayley graph at time $n$
grows linearly with $n$. This result was a landmark in the
understanding of the \emph{Poisson boundary} of a group i.e.\ the
space of bounded harmonic functions.

The ``if'' and the ``only if'' directions of the theorem are quite different in nature. The
first direction states that once the entropy is sublinear the graph is
Liouville i.e.\ does not admit a non-constant
bounded harmonic function (this direction was proved earlier
\cite{Av}). This direction may be quantified,
e.g.\ one may show that there are no harmonic functions growing faster
than $\sqrt{n/H(X_n)}$ where $H(X_n)$ is the entropy of the random
walk. This is a known fact \cite{EK10,BDKY11} but for completeness we
give the proof in the appendix.

In this paper we study the question ``how tight is the bound
$\sqrt{n/H_n(X)}$?'' As a simple example let us take the lamplighter
group $(\Z/2)\wr\Z$ (precise definitions will be given later, see \S\ref{sec:wreath}). We show
\begin{thm}\label{thm:ll}The lamplighter group $(\Z/2)\wr\Z$ with the standard
  generators does not support any non-constant harmonic
  function $h$ with $h(x)=o(|x|)$ where $|\cdot|$ is the word metric.
\end{thm}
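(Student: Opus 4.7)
Let $G=(\Z/2)\wr\Z$ with standard generators, and write $N=\bigoplus_{\Z}(\Z/2)$ for the normal subgroup of lamp configurations, so $G/N\cong\Z$ and the projection of the walk on $G$ is the lazy simple random walk on $\Z$. My overall strategy is to show that any harmonic $h$ with $h(x)=o(|x|)$ is necessarily $N$-invariant (on the right: $h(xn)=h(x)$ for $n\in N$). Once this is established, $h$ descends to a harmonic function $\bar h$ on $\Z$ for the lazy SRW, which must be affine by the one-dimensional argument; sublinearity then forces $\bar h$ (and hence $h$) to be constant.

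Since $N$ is generated by the single-lamp elements $\delta_k$ ($k\in\Z$), $N$-invariance reduces to showing $h(x\delta_k)=h(x)$ for every $x\in G$ and $k\in\Z$. I would attack this by coupling two $\mu$-random walks $X_t,Y_t$ started at $x$ and $x\delta_k$, together with an appropriate stopping time $\tau$. The naive coupling (use the same generator sequence in both walks) leaves the offset $\delta_k$ invariant for all time, so a cleverer coupling is needed: by recurrence of SRW on $\Z$ the walker visits the discrepancy site infinitely often, and by coupling the flip decisions independently at those visits one can align the lamps while keeping the two walker trajectories identical. Combined with optional stopping and the bound $|{\E} f(X)-\E f(Y)|\le\|f\|_{L^\infty(\mathrm{supp})}\cdot 2\,d_{\mathrm{TV}}$,
\[
|h(x)-h(x\delta_k)|\ \le\ \Bigl(\sup_{|y|\le C(|x|+\sqrt\tau)}|h(y)|\Bigr)\cdot 2\,d_{\mathrm{TV}}\bigl(\mathrm{Law}(X_\tau),\mathrm{Law}(Y_\tau)\bigr),
\]
where the first factor is $o(\sqrt\tau)$ by the sublinear hypothesis, since the walker stays in a ball of radius $O(\sqrt\tau)$ around $x$ with high probability.

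The main obstacle is to construct the coupling so that the total-variation distance decays strictly faster than $1/\sqrt\tau$, whence the right-hand side tends to $0$ as $\tau\to\infty$ and yields $h(x)=h(x\delta_k)$. The lamp at the discrepancy site mixes only via visits of the walker; its local time at a fixed site after $\tau$ steps is of order $\sqrt\tau$, and the parity of the number of flips converges to uniform at a rate that has to be made quantitative and combined with a careful control of the walker excursions away from $k$. This balance is the delicate heart of the argument, and it is exactly where the one-dimensionality of the base enters: in $(\Z/2)\wr\Z^2$ the walker explores its range less efficiently and only the weaker sublogarithmic conclusion announced in the abstract can be expected by analogous methods.
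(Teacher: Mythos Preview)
Your strategy---couple two walks differing at a single lamp, use optional stopping to show $h$ is lamp-invariant, then descend to a harmonic function on $\Z$---is exactly the paper's. The gap is in the choice of stopping time and the resulting estimates.

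What you have written only makes sense if $\tau$ is a deterministic time, and then the displayed inequality is incorrect: the support of $X_\tau$ has word-metric radius $|x|+\tau$, not $|x|+O(\sqrt\tau)$, so the $L^\infty$--total-variation bound needs $\sup_{|y|\le |x|+\tau}|h(y)|=o(\tau)$ in the first factor. The phrase ``with high probability'' does not rescue this, since the $L^\infty$ in that inequality must be taken over the full support. Worse, the total-variation distance at a fixed time is of order \emph{exactly} $1/\sqrt\tau$, not smaller: the event that the base walk makes only $O(1)$ visits to the discrepancy site by time $\tau$ (for instance, that the first excursion away from it lasts longer than $\tau$) already has probability $\asymp 1/\sqrt\tau$, and on that event your coupling fails with positive probability. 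So your stated target $d_{\mathrm{TV}}=o(1/\sqrt\tau)$ cannot be reached, and with the correct first factor $o(\tau)$ you would need $d_{\mathrm{TV}}=o(1/\tau)$, which is hopeless at fixed times. Cauchy--Schwarz or H\"older variants do not close the gap either.

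The paper's fix is to take as stopping time $T_r=\inf\{t:|n_t|= r\}$, the first exit of the base walk from $[-r,r]$. This buys two things at once. First, a \emph{deterministic} bound $|X_{T_r}|\le Cr$: the range of the base walk up to $T_r$ is contained in $[-r,r]$, so the word length of $X_{T_r}$ is at most a constant times $r$, and the first factor is $o(r)$ with no probabilistic hedging. Second, a gambler's-ruin computation gives $\Pr(\text{not glued by }T_r)\le C/r$: the number of visits to the discrepancy site before exiting $[-r,r]$ is (after the first hit) geometric with mean of order $r$, and each visit has a fixed positive chance to glue. The product $o(r)\cdot C/r\to 0$ and the proof is complete. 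The ``delicate balance'' you anticipate disappears once you stop spatially rather than temporally.
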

Thus on the lamplighter group the bound $\sqrt{n/H_n(X)}$ is not tight. It
is well-known and easy to see that the entropy is $\sqrt{n}$ and hence
the bound gives only that harmonic functions growing slower than
$n^{1/4}$ are constant. As Theorem \ref{thm:ll} is quite simple but
still instructive, let us sketch its proof.
\begin{proof}[Proof sketch]\label{pg:proofll}
Let us use the generators ``move or switch'' i.e.\ if we write any
element of $(\Z/2)\wr\Z$ as a couple $(\o,n)$ with $\o:\Z\to\Z/2$ and
$n\in\Z$ then the generators are 
$\{(\mathbf{1}_0,0),(\vec{0},1),(\vec{0},-1)\}$.
Examine two elements $g_1,g_2\in(\Z/2)\wr\Z$ which differ only in the
configuration at 0, i.e.\ if $g_i=(\o_i,n_i)$, then $n_1=n_2$ and $\o_1(k)=\o_2(k)$ for all $k\ne 0$. 

Let $X_n^i$ be two lazy random walks (with laziness probability
$\frac 14$) starting from $g_i$, and
couple them as follows. Changes to the $\Z$ component are done identically so
that the $\Z$ components of $X^1_n$ and $X^2_n$ are always
identical. Changes to the configuration are also done identically
except when the walkers ``are at 0'' (i.e.\ their $\Z$ component is
0) and their configurations are still different. In this case, if one
walker switches (i.e.\ goes in the $(\mathbf{1}_0,0)$ direction) then
the other walker stays lazily at its place, and vice versa. 

It is now clear that each time both walkers are at 0 they have a
probability $\frac 12$ to ``glue'' i.e.\ to have $X_n^1=X_n^2$, and
when this happens this is preserved forever. Define $T_r$ to be the
first time the walkers are at $\pm r$. Because $h(g)$ is bounded for
all time up to $T_r$ we may use the optional stopping theorem to claim
that
\[
h(g_i)=\E(h(X_{T_r}^i)).
\]
Let $E$ be the gluing time. Then we can write
\begin{alignat*}{2}
h(g_1)-h(g_2)&=\,&&\E(h(X_{T_r}^1)-h(X_{T_r}^2)) =\\
&=\,&&\E\big((h(X_{T_r}^1)-h(X_{T_r}^2))\mathbf{1}\{E<T_r\}\big)\;+\\
&&&\E\big((h(X_{T_r}^1)-h(X_{T_r}^2))\mathbf{1}\{E\ge T_r\}\big).
\end{alignat*}
The first term is simply 0 because if the walkers glued before $T_r$
then $X_{T_r}^1=X_{T_r}^2$. The second term is bounded by 
\[
\Pr(E\ge T)\cdot 2\max\{h(g):g\textrm{ can be the value of }X_{T_r}\}.
\]
The probability is $\le C/r$ from known properties of random walk on
$\Z$. On the other hand, for $r>\max\{|\supp\, \o_i|,|n_i|\}$
the only $g$ that can be values of
$X_{T_r}$ have distance $\le 5r$ from the identity of $(\Z/2)\wr \Z$
and by the sublinearity of $h$ we get $h(g)=o(r)$. We get that 
\[
h(g_1)-h(g_2)=0+\frac{C}{r}o(r)\xrightarrow{r\to\infty}0
\]
and that $h((\o,n))$ does not depend on the value of
$\o(0)$. Translating we get that it does not depend on the value of
any lamp i.e.\ on any $\o(i)$. This means that it is a function of $n$ only, which is harmonic,
implying that it is a harmonic function on $\Z$. But a harmonic function
on $\Z$ (with the generators $\pm 1$) is linear, which can be proved
by a simple induction. Thus, $h$ is constant.
\end{proof}

The result is sharp since for the lamplighter there is an obvious linear
growth harmonic function: the $\Z$ component. We remark also that, in general, every
finitely-generated group supports a non-constant linear growth harmonic function. See
e.g.\ \cite{Kle10,T10}.
It is also instructive at this point to compare the lamplighter to
nilpotent groups. Similarly to the lamplighter, nilpotent groups
do not support any non-constant sublinear growth harmonic functions
(see e.g.\ remark \ref{rem:nilpotent} below). 
However nilpotent groups have much lower entropy: $\log n$
vs.\ $\sqrt{n}$ for the lamplighter.

The next result concerns wreath products with $\Z^2$, or more
generally any recurrent group.
\begin{thm}
\label{thm:no sub log on LL}
Let $L$ be a finitely generated group and $\mu$ a symmetric measure over a finite set of
generators such that $L$ supports no $\mu$-harmonic sublogarithmic
non-constant function. Let $G$ be a recurrent group with respect to a
measure $\nu$. Let $\nu\wr\mu$ be the ``move or switch'' (each with
probability $\frac 12$) measure on
$L\wr G$. Then $L\wr G$  does not support any $\nu\wr\mu$-harmonic
sublogarithmic non-constant function.
\end{thm}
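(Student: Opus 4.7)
My plan is to adapt the coupling-plus-optional-stopping strategy of Theorem~\ref{thm:ll} to the present setting. Given a $\nu\wr\mu$-harmonic $h$ on $L\wr G$ with $h(g)=o(\log|g|)$, it suffices to show that $h(g_1)=h(g_2)$ whenever $g_1,g_2\in L\wr G$ differ only in the value of the lamp at $e_G$. Once this is proved, the same argument run at other lamp positions (which the walker visits infinitely often since $G$ is recurrent) shows that $h$ depends only on its $G$-component, so that $h$ restricts to a sublogarithmic $\nu$-harmonic function on $G$; since $G$ is recurrent and finitely generated, this restriction must be constant.

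To prove the lamp-at-$e_G$ step, fix $\omega_0$ supported outside $e_G$, write $g_i=(a_i\delta_{e_G}+\omega_0,e_G)$ for $i=1,2$, and couple two walks $X^1,X^2$ starting from $g_1,g_2$ by making the move/switch decisions, the $G$-steps, and the $\mu$-steps at switches outside $e_G$ all identical, while drawing the $\mu$-steps at switches at $e_G$ independently in the two walks. The two walks then have identical $G$-component and identical lamps outside $e_G$; the lamp at $e_G$ in each walk evolves as an independent $\mu$-random walk on $L$, stepping at the (random) times the walker is at $e_G$ and chooses to switch. Let $\tau_k$ be the $k$-th such switch-at-$e_G$ time, finite a.s.\ by recurrence of $G$. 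Optional stopping at $\tau_k$, together with the independence of the $e_G$-switch draws from the background, gives the identity
\begin{equation*}
 h(g_1)-h(g_2)=\sum_{w\in L}\mu^{*k}(w)\,\bigl(\bar H_k(a_1w)-\bar H_k(a_2w)\bigr)\qquad\text{for every }k\ge 1,
\end{equation*}
where $\bar H_k(d):=\E\bigl[h\bigl((d\delta_{e_G}+\omega_0+\Delta_k,e_G)\bigr)\bigr]$ averages $h$ over the random background $\Delta_k$ (the cumulative switches at positions $y\ne e_G$ up to time $\tau_k$), whose distribution does not depend on $d$.

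The plan is then to let $k\to\infty$ and use the hypothesis on $L$. Qualitatively, the $k$-fold $\mu$-smoothing on the right-hand side should converge to a $\mu$-harmonic function of $a$, which is sublogarithmic and hence constant by hypothesis, forcing the difference $h(g_1)-h(g_2)$ to vanish. Making this rigorous is the main obstacle: one must show that $\bar H_k$ remains uniformly sublogarithmic in $d$ (despite the growth of $\Delta_k$ with $k$) so that a subsequential pointwise limit $\Phi:L\to\R$ can be extracted, verified to be $\mu$-harmonic, and recognized as sublogarithmic. Controlling $\bar H_k$ requires balancing the switch counter $k$ against an exit time from a ball of radius $R$ in $G$ and estimating quantitatively how far the random walk on $L\wr G$ spreads before time $\tau_k$. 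This is the step where the hypothesis on $L$ is genuinely used: the Liouville property alone would suffice if $\bar H_k$ were bounded, but since $\bar H_k$ is only sublogarithmic, the full no-sublogarithmic-harmonic assumption on $L$ is required to kill the limit function $\Phi$.
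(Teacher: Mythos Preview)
Your framework matches the paper's: reduce to showing $\psi(\ell):=h(\phi_\ell(x))$ is constant by deriving an identity $\psi=(\text{$k$-step kernel on $L$})\, f_k$ for every $k$, then invoke the hypothesis on $L$. But the proposal leaves the two hard steps unresolved, and your sketched plan for the second one does not work.

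First, optional stopping at $\tau_k$ is unjustified: $h$ is unbounded and $\tau_k$ is unbounded, so $\E[h(X_{\tau_k})]$ need not equal $h(x)$ or even be finite. The paper truncates with the exit time $E(r)$ from a $G$-ball of radius $r$, defines $f_k(\ell)=\lim_{r\to\infty}\E_x[h(\phi_\ell(X_{T_k\wedge E(r)}))]$, and proves that the limit exists together with the key bound $|f_k(\ell)|\le Ck^3\log(|x|+|\ell|)$ (Proposition~\ref{lem:k-th return}). This is the longest argument in the paper: it decomposes the walk into excursions from $1_G$, isolates the excursion of maximal height, and uses exchangeability to move it to the end so that harmonicity of $h$ can be applied on that last excursion alone. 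Your proposal names this difficulty but offers no mechanism for it.

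Second, your proposed endgame---extract a subsequential limit $\Phi$ of $\bar H_k$ and recognize it as $\mu$-harmonic and sublogarithmic---does not work. There is no reason for $\bar H_k$ to converge, and even with uniform-in-$k$ sublogarithmic bounds the identity $\psi=P^k\bar H_k$ does not force $\psi$ to be harmonic (on $\Z$, $\psi(n)=n^2$ satisfies $\psi=P^k f_k$ with $f_k(n)=n^2-k$). The paper's mechanism is different and depends essentially on a choice you discarded: it stops at the $k$-th \emph{visit} to $1_G$ rather than the $k$-th switch, so the lamp kernel is the \emph{lazy} $Q=\tfrac12(I+P)$. Writing $Q^k=\sum_j\binom{k}{j}2^{-k}P^j$ and bounding the $m$-th differences of the binomial weights by $(Cm/k)^{m/2}$ (Lemma~\ref{prop:derivative of q}) gives, together with the polynomial-in-$k$ bound on $f_k$, that $(I-P)^m\psi\equiv 0$ for $m$ large (Proposition~\ref{lem:vphi=P^k f}). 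One then iterates: $(I-P)^{m-1}\psi$ is harmonic and sublogarithmic, hence constant by the hypothesis $\har(L)\succeq\log$; that constant is zero by Lemma~\ref{prop:constant Laplacian} since $(I-P)^{m-2}\psi$ is sublinear; and so on down to $\psi$ constant. The laziness is indispensable for the smoothing bound, and the polynomial-in-$k$ growth of $f_k$ is exactly what $k^{-m/2}$ must beat---so both Propositions~\ref{lem:k-th return} and~\ref{lem:vphi=P^k f} are doing real work that your outline does not supply.
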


In particular, this means that repeated wreath products with $\Z^2$
i.e.
\[
\underbrace{(\dotsb(\Z/2\wr\Z^2)\wr\Z^2)\wr\dotsb\wr\Z^2}_
{k\textrm{ times}}
\]
do not support any sublogarithmic non-constant harmonic functions
(with respect to the natural set of generators). As we will see below
(Proposition \ref{prop:logloglog} on page \pageref{prop:logloglog}), this group has entropy $n/\log^{(k)} n$. This is another
obstacle for quantifying the Derriennic-Kaimanovich-Vershik
theorem. We remark that constructing non-constant harmonic functions
growing logarithmically (which shows that Theorem 
\ref{thm:no sub log on LL} is sharp) is easy, and we do it in
\S\ref{sec:construct log}.

%
%
%

One may consider a similar statement for wreath products with $\Z$.
This case is much harder, and we plan to tackle it in a future paper.
Our methods can be used for some of the analysis, but 
these methods require information regarding the speed of the random walk
on the lamp group, and thus the analysis is more delicate.

It is not known whether the Liouville property depends on the choice
of generators and this is a major open problem. Similarly, we do not
know whether claims such as ``$G$ does not support a non-constant
sublinear harmonic function'' are group properties. As this is not
the focus of the paper, we will always work with the most convenient
system of generators. Theorem \ref{thm:ll} can be strengthened to hold for any
symmetric finitely-supported generating measure, and Theorem
\ref{thm:no sub log on LL} 
may be strengthened so
that the conclusion on iterated wreath products would hold for any set
of generators, but we will not do it here.


\subsection{Notation}

For a graph $G$, 
we write $x \sim_G y$ to denote two adjacent vertices
in $G$.  The graph metric will be denoted by $\dist_G(\cdot,\cdot)$.
If $G$ is a group, $1_G$ denotes the unit element in $G$. 

Suppose $G = \langle S \rangle$ is generated by a finite set $S$ such that $S = S^{-1}$
(i.e.\ $S$ is symmetric).
In this case it is natural to consider the Cayley graph of $G$ with respect to $S$, and 
the graph distance in this graph as the metric on $G$ (this is also
known as the word metric on $G$ with respect to $S$). For every $g\in
G$ we denote $|g|=\dist_G(1_G,g)$. 
Let $\mu$ be a symmetric probability measure on $S$; that is, $\mu(s^{-1}) = \mu(s)$ for all $s \in S$.
Then $\mu$ induces a Markov chain on $G$, namely the process with transition probabilities
$P(x,y)  = \mu(x^{-1} y)$.
We call this process the {\em random walk} on $G$.
The law of the random walk on $G$ started from $x\in G$ is denoted by
$\Pr^G_x$. 
When we refer to the walk started at $1_G$, we omit the reference to the starting point, i.e. $\Pr^G = \Pr^G_{1_G}$.

In all these notations we will omit the notation `$G$' when the
underlying graph (or group) is clear from the context.

For a function $h:G \to \R$,
let
$$ M_h(x,r) = \max \set{ |h(y)-h(x)| \ : \ \dist(x,y) \leq r } . $$
Let $M_h(r) = M_h(1_G,r)$. We use $f\ll g$ as a short notation for $f=o(g)$ and
$f\approx g$ if $f/g$ is bounded between two constants.

\subsection{Harmonic Growth}

For a group $G$ and a finitely-supported measure $\mu$ on $G$, 
a function $h:G \to \R$ is called $\mu$-{\em harmonic} if for every $x \in G$,
$h(x) = \E_x [ h(X_1) ] $, where $(X_n)_{n \geq 0}$ is a $\mu$-random
walk on $G$. In other words, $h(X_n)$ is a martingale. If $\mu$ is clear from the context we will just call such
functions \emph{harmonic}.

The harmonic growth of a graph $G$ is the smallest rate of growth of a non-constant
harmonic function on $G$.  (In this paper we only work with Cayley graphs, so we will 
consider growth around $1_G$.)
For a monotone non-decreasing
function $f:\N \to [0,\infty)$,
we say that $G$ has {\em harmonic growth} 
at least $f$ (this is denoted by $\har(G) \succeq f$ --- note that we
do not claim $\har(G)$ is some well-defined function, this is just a
shorthand notation),
if for all non-constant harmonic $h:G \to \R$, there exists a constant $c>0$ such that $M_h \ge cf$.
The graph $G$ is said to have harmonic growth at most $f$ 
if there exists a harmonic function $h:G \to \R$
such that $M_h \le Cf$ for some constant $C>0$ (this is denoted by $\har(G) \preceq f$).
If the harmonic growth of $G$ is at least $f$ and at most $f$
then we say that $G$ has harmonic growth $f$, and denote this $\har(G) \approx f$. 
Note that the harmonic growth of a graph is an asymptotic notion. 
In particular, it depends only on the behavior of $f$ at infinity.
Let us mention a few properties of the harmonic growth:
\begin{enumerate}
\item The harmonic growth of a Cayley graph is always at most
  linear since every such graph possesses a linearly growing harmonic
  function \cite{Kle10, T10}.
\item The harmonic growth of $\Z^d$ is linear (the function
  $h(x_1,\ldots,x_d) = x_1$ is harmonic, and there are no non-constant 
  sublinear growth harmonic functions).
\end{enumerate}

\subsection{Lamplighters}\label{sec:wreath}
We now define the groups that are of interest to us, as well as their
natural set of generators. These are called \emph{wreath products} or
\emph{generalized lamplighters}, with the lamplighter group being the
simplest example $(\Z/2)\wr\Z$.

Let $L,G$ be groups.  The {\em wreath product} $L \wr G$
is the semi-direct product $L^G \rtimes G$, where $L^G$ is the group
of all functions from $G$ to $L$ which are $1_L$ for all but finitely
many elements of $G$ (such functions are called function with finite support) 
and where $G$ acts on
$L^G$ by translations. We will denote elements of $L\wr G$ by
$(\o,g)$ with $\o\in L^G$ and $g\in G$ so the product is
$$ (\o,g) (\xi,k) = (\o (\cdot) \xi( g^{-1} \cdot) , gk) . $$
For an element $(\o,g) \in L \wr G$, and $k \in G$,
we call $g$ the {\em lamplighter (position)},
and $\o(k)$ is the {\em (status of the) lamp at $k$}.
The group $G$ is sometimes called the {\em base} group and the group $L$ the group of {\em lamps}.

For $\ell \in L$, define the delta function $\delta_\ell\in L^G$ by
\[
\delta_\ell(g)=\begin{cases}
\ell & \textrm{ if } g=1_G\\
1_L & \textrm{otherwise.}
\end{cases}
\]
and $\mathbf{1}=\delta_{1_L}$.
Let $S$ be a generating set of $L$ and $U$ a generating set of $G$.
Consider the set
$$ \Gamma = \set{(\delta_s, 1)\,:\,s\in S}\cup \set{ (\mathbf{1},u)\,:\, u \in U } . $$
It is not difficult to see that $\Gamma$ generates $L \wr G$.
Right multiplication by $(\mathbf{1},u)$ corresponds to moving the lamplighter in $G$ while right-multiplying by $(\delta_s,1)$ corresponds
to changing the status of the current lamp by right-multiplying it by $s$.
Given symmetric probability measures, 
$\mu$ supported on $S$ and $\nu$ supported on $U$, 
we can define the \emph{move or switch} measure, which is a symmetric
probability measure $\mu \wr \nu$ supported on $\Gamma$, by
$$ (\mu \wr \nu) (\mathbf{1},u) := \frac12 \cdot \nu(u) 
\qquad \textrm{ and }  \qquad
(\mu \wr \nu) (\delta_s,1) := \frac12 \cdot \mu(s) . $$
That is, under the measure $\mu \wr \nu$, the walk on $L \wr G$ has
the following behavior:  with probability $1/2$ the lamplighter moves
in $G$ according to the distribution given by $\nu$; with the
remaining probability $1/2$ the lamplighter does not move but rather
changes the status of the current lamp according to the distribution
given by $\mu$.

If the base group $G$ is transient, then 
$L \wr G$ admits bounded non-constant 
harmonic functions ({\em i.e.}~ is not Liouville).
For instance, one may consider the function $h(\o)$ to be the probability
that the status of the lamp at $1_G$ differs eventually from $1_L$.

As a consequence, $(\Z/2\Z) \wr \Z^3$ is an example of an amenable
non-Liouville group \cite[\S 6.2]{KV83}. See also \cite{G10} for a
proof that these groups nevertheless do not support non-constant
harmonic functions of bounded energy.

\section{Proof of Theorem \ref{thm:no sub log on LL}}

Recall the statement of Theorem \ref{thm:no sub log on LL}: if $G$ is
recurrent and if $L$ does not support any non-constant sublogarithmic functions,
then neither does $L\wr G$. Before starting the proof, let us
remark that the difficulty lies in the case that $L$ is infinite. If
$L$ is finite, then the theorem may be proved quite similarly to the
proof of Theorem \ref{thm:ll} (see page \pageref{pg:proofll}). Let us
recall quickly the argument:
\begin{proof}[Sketch of the finite $L$ case]
Let $x_1,x_2\in L\wr G$ differ only in the configuration at
$1_H$. Examine two lazy random walkers starting from the $g_i$ and coupled to
walk together except when they are both at $1_H$, where they have
positive probability to glue for all time. We define $E$ to be the
gluing time and $T_r$ to be the first time that the walker
reaches distance $r$ from $1_G$. Known estimate for return
probabilities on recurrent groups (which, by Gromov's theorem are
finite extensions of $\Z$ or $\Z^2$) show that
$\Pr(E\ge T)\le C/\log r$. The sublogarithmicity of $h$ shows that
the contribution of this event decays as $r\to\infty$ and the coupling
shows that $h$ does not depend on the lamp at $1_G$. Translating we
get that $h$ does not depend on the state of the lamps at all, and
hence may be considered as a harmonic function on $G$. But any
sublinear harmonic functions on a virtually nilpotent group is
constant (remark \ref{rem:nilpotent}).
\end{proof} 

Where changes for $L$ infinite is that one can no longer claim
that the probability that $E$ occurred before $k$ returns to $1_G$ increases to
1 exponentially fast in $k$. Even in the simplest case that the lamp group is
$\Z$, this probability decays only like $1/\sqrt{k}$ and had we
translated the proof literally we would only get that $\Z\wr \Z^2$ has
no sub-$\sqrt[3]{\log}$ non-constant harmonic functions. 


To solve this problem we replace our $x_1,x_2$ with infinitely many
$x$, which differ only at the lamp at $1_G$. This gives a function
$\psi:L\to\R$ with $\psi(\ell)=h(x_\ell)$, where $x_\ell\in L\wr G$ is
$x$ with the status of the lamp at $1_G$ set to $\ell$. Now, $\psi$ is sublogarithmic on $L$ but is not
necessarily harmonic on it. However, the harmonicity and
sublogarithmic growth of $h$ on $L\wr G$ allows to use the strong
Markov property to represent $\psi(\ell)$ as the value of $h$ at the
$k^{\textrm{th}}$ return of a random walker to $1_G$. This means that
$\psi$ may be written as $P^kf_k$, where $f_k$ is the value of $h$
had the lamp at $1_H$ never moved (and $P$ is the transition kernel of the 
lazy random walk on
$L$). The sublogarithmic growth of $h$ allows to show that
$f_k(\ell)\le Ck^3\log |\ell|$ (the polynomial growth in $k$
is the important fact here), see Proposition 
\ref{lem:k-th return}. 
We will show (Proposition \ref{lem:vphi=P^k f}) that such estimates imply that
$\psi$ is constant. The laziness of the walk plays an important role
in this step.

The approach is significantly complicated by the fact that we do not
know a-priori that the value of $h$ at the
$k^{\textrm{th}}$ return to $1_G$ is integrable. This
complicates the definition of $f_k$ and some parts of the
argument. The details are provided in the next sections. 



\subsection{Preliminaries}

We begin with some preliminary results.

\begin{lem}
\label{prop:derivative of q}
Fix $p\in (0,1)$ and $n\in \N$. Let $b(k)=\binom nk p^k(1-p)^{n-k}$
and $b(k)=0$ for $k\in\Z\setminus\{0,\dotsc,n\}$.
Then for the difference operator
defined by $\p \psi(k) = \psi(k) - \psi(k+1)$ we have that,
for any $k$,
\begin{equation}\label{eq:binomderiv}
\abs{ \p^m b(k) } \leq \sr{ \frac{m}{p(1-p)n} }^{m/2} .
\end{equation}
\end{lem}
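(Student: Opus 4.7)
The plan is to exploit the convolution structure of $b$: as the pmf of a sum of $n$ independent Bernoulli$(p)$ random variables, $b = b_1^{*n}$ where $b_1$ is the Bernoulli pmf. Since the difference operator commutes with convolution, in the sense that $\p(f*g) = (\p f)*g = f*(\p g)$, iteration lets us distribute one derivative onto each factor: for any decomposition $n = n_1 + \dotsb + n_m$ into positive integers,
$$\p^m b = \p b_{n_1} * \dotsb * \p b_{n_m},$$
where $b_{n_i}$ denotes the $\mathrm{Binomial}(n_i,p)$ pmf. Assuming $n\ge m$, choose each $n_i \in \{\lfloor n/m\rfloor,\lceil n/m\rceil\}$ summing to $n$, and iterate Young's inequality $\norm{f*g}_\infty \le \norm{f}_1\norm{g}_\infty$ to get
$$\norm{\p^m b}_\infty \le \norm{\p b_{n_1}}_\infty \prod_{i\ge 2}\norm{\p b_{n_i}}_1.$$

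Each factor on the right is then controlled by the peak height of $b_{n_i}$. Unimodality of the binomial implies, via a standard telescoping argument, that $\norm{\p b_{n_i}}_1 = 2\max_k b_{n_i}(k)$, while trivially $\norm{\p b_{n_i}}_\infty \le 2\max_k b_{n_i}(k)$. A Stirling-type estimate (uniform in $n_i\ge 1$ and $p\in(0,1)$, verified via $n! \le \sqrt{2\pi n}(n/e)^n e^{1/(12n)}$ together with a direct calculus check at $n_i=1$) yields $\max_k b_{n_i}(k) \le 1/\sqrt{2\pi p(1-p)n_i}$. Combining,
$$\norm{\p^m b}_\infty \le \Big(\tfrac{2}{\sqrt{2\pi}}\Big)^m \frac{1}{(p(1-p))^{m/2}}\prod_i \frac{1}{\sqrt{n_i}} \le \Big(\frac{m}{p(1-p)n}\Big)^{m/2},$$
where the last inequality uses AM-GM (giving $\prod n_i \approx (n/m)^m$) together with the fact that $2/\sqrt{2\pi} < 1$ to absorb the bounded loss coming from integer rounding of the $n_i$.

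The remaining case $n < m$ is handled by the trivial estimate $\abs{\p^m b(k)} \le \sum_{j=0}^m \binom{m}{j}\abs{b(k+j)} \le 2^m$, which already suffices: since $p(1-p) \le 1/4$, the target bound satisfies $(m/(p(1-p)n))^{m/2} \ge (1/p(1-p))^{m/2} \ge 2^m$ whenever $n < m$.

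The conceptual heart of the argument is the commutation $\p(f*g) = f*\p g$, which converts the $m$-th difference estimate into a product of peak-height estimates for binomials of smaller size. The main obstacle is bookkeeping: verifying the uniform Stirling bound $\max b_n \le 1/\sqrt{2\pi p(1-p)n}$ at all $n\ge 1$ and $p\in (0,1)$ (including extreme $p$, where the bound is tightest for small $n$), and tracking constants carefully enough so that the exponentially decaying factor $(2/\sqrt{2\pi})^m$ comfortably absorbs the rounding loss from the $n_i$ differing from $n/m$ by at most one.
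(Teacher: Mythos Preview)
Your approach is correct in outline and genuinely different from the paper's. The paper uses Fourier inversion: writing $b(k) = \frac{1}{2\pi}\int_{-\pi}^{\pi}(1-p+pe^{it})^n e^{-itk}\,dt$, the operator $\p$ becomes multiplication by $1-e^{it}$, and one bounds $\abs{\p^m b(k)}$ by the supremum of $\abs{(1-p+pe^{it})}^n\abs{1-e^{it}}^m$, which after the substitution $u=2p(1-p)(1-\cos t)$ reduces to a one-variable calculus problem with optimum at $u=m/(m+n)$. This is a three-line argument with no case analysis and no constants to track. Your route---factoring $b$ as an $m$-fold convolution, distributing one $\p$ onto each factor, and invoking Young's inequality, unimodality, and a Stirling bound on the binomial mode---is more hands-on and probabilistic, and avoids Fourier analysis entirely. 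Both give the same bound; the paper's buys brevity, yours buys elementarity.

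Two small points of bookkeeping to tighten. First, AM--GM goes the wrong way here: it gives $\prod n_i \le (n/m)^m$, whereas you need a \emph{lower} bound. What you actually use is that with $n_i\in\{a,a{+}1\}$ for $a=\lfloor n/m\rfloor$, one has $\frac{1}{m}\log\bigl(\prod n_i / (n/m)^m\bigr) = (1-\theta)\log a + \theta\log(a{+}1) - \log(a{+}\theta)$ for some $\theta\in[0,1)$; a short check shows this is $\ge -0.06$ for all $a\ge 1$, so the rounding costs at most $e^{0.06m}$, which is indeed absorbed by $(2/\sqrt{2\pi})^m\approx 0.80^m$. Second, the uniform mode bound $\max_k b_n(k)\le 1/\sqrt{2\pi p(1-p)n}$ is true but deserves a line of justification: for $1\le k\le n-1$ the Robbins form of Stirling gives $\binom{n}{k}\le \frac{1}{\sqrt{2\pi}}\sqrt{\tfrac{n}{k(n-k)}}\,\tfrac{n^n}{k^k(n-k)^{n-k}}$, and combining this with $\bigl(\tfrac{np}{k}\bigr)^k\bigl(\tfrac{nq}{n-k}\bigr)^{n-k}\le 1$ handles the interior; the boundary cases $k\in\{0,n\}$ and $n=1$ are the direct check you mention.
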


\begin{proof}
From the binomial formula,
$$ \sr{ 1 - p + p e^{it} }^n = \sum_k b(k) e^{it k}$$
which leads to
$$ b(k) = \frac{1}{2\pi} \int_{-\pi}^{\pi} \E [ e^{it S_n} ] e^{-it k} dt = \frac{1}{2 \pi} 
\int_{-\pi}^{\pi} \sr{ 1-p + p e^{it} }^n \cdot e^{-it k} dt . $$
Applying $\partial$ is the same as multiplying by $1-e^{it}$ in the
Fourier domain hence 
$$ \p^m b(k) = \frac{1}{2\pi} \int_{-\pi}^{\pi} \sr{ 1-p + p e^{it} }^n \cdot 
 (1-e^{it} )^m \cdot e^{-it k} dt . $$
We estimate the integral by the maximum of the absolute value of the
integrand. The expression for the maximum would be shorter if we use
the quantity 
$u = 2p (1-p)(1-\cos(t)) \in [0, 1]$. We get 
$$ | \p^m b(k) | \leq \sup_{u \in [0,1]}  (1 - u)^{n/2} \cdot u^{m/2} \cdot p^{-m/2} (1-p)^{-m/2} , $$
which is maximized at $u = \frac{m}{m+n}$.  Hence,
\begin{equation*}
\abs{ \p^m b(k) } \leq \sr{ \frac{m}{p(1-p)n} }^{m/2} .\qedhere
\end{equation*}
\end{proof}

%

\begin{lem}
\label{prop:constant Laplacian}
Let $G$ be a group and let $P$ be the transition matrix of some random
walk on $G$. Let $\psi:G \to \R$ be a function with sub-linear growth.
Then if $(I-P) \psi$ is constant, 
then this constant must be zero (and then $\psi$ is harmonic).
\end{lem}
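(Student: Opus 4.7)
The plan is to reduce to a telescoping identity and then extract the claim from a direct growth estimate. Set $c:=(I-P)\psi$, which is a constant by hypothesis. Since $P$ acts as the identity on constants, we have $P^k c = c$ for every $k\ge 0$, and therefore
\[
\psi - P^n \psi \;=\; \sum_{k=0}^{n-1} \bigl(P^k \psi - P^{k+1}\psi\bigr) \;=\; \sum_{k=0}^{n-1} P^k (I-P)\psi \;=\; nc.
\]

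Next I would evaluate this identity at an arbitrary fixed point $x\in G$. Writing $(X_n)$ for the random walk driven by $P$ started at $x$, this reads $\psi(x) - \E_x[\psi(X_n)] = nc$. Because the step distribution is finitely supported, there is a constant $R$ (the maximal step length) such that $|X_n| \le |x| + Rn$ almost surely. The sublinearity hypothesis on $\psi$ says $M_\psi(r)/r \to 0$, and hence
\[
|\psi(X_n)| \;\le\; |\psi(1_G)| + M_\psi(|x|+Rn) \;=\; o(n)
\]
almost surely, and the bound is deterministic so it passes to the expectation, giving $\E_x[\psi(X_n)] = o(n)$.

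Combining the two displays, $nc = \psi(x) - \E_x[\psi(X_n)] = O(1) + o(n)$. Dividing by $n$ and letting $n\to\infty$ forces $c=0$, so $(I-P)\psi\equiv 0$, which is exactly the statement that $\psi$ is $P$-harmonic.

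The only delicate point is the a priori integrability and $o(n)$ control of $P^n\psi$, since $\psi$ is not assumed bounded; this is where finite support of the step distribution is essential, because it promotes sublinearity in the word metric to a deterministic sublinear bound on $\psi(X_n)$. Everything else is bookkeeping around the telescoping identity.
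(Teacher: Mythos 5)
Your argument is correct and is essentially the paper's argument: your telescoping identity $\psi - P^n\psi = nc$ is precisely the statement that $\psi(X_t) + tc$ is a martingale (the paper phrases it this way, with $K = -c$), and both proofs then divide by $n$ and invoke sublinearity. Your explicit remark that finite support of the step distribution is what makes $\E_x[\psi(X_n)] = o(n)$ deterministic and integrable is a welcome precision that the paper leaves implicit.
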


{\em Remark.} In this lemma the random walk need not be symmetric.

\begin{proof}
Let $(X_t)$ be the random walk on $G$ with transitions given by $P$
and let $K$ be the constant from the statement of the lemma,
i.e.\ $(P-I)\psi\equiv K$. Then $ M_t : = \psi(X_{t}) - t K$ is a martingale
and hence for all $t$,
$$ \psi(x) = M_0 = \E_x [ M_t ] = \E_x [ \psi(X_{t}) ] - t K . $$
But since $\psi$ has sub-linear growth, 
$$K = \frac{1}{t} \E_x [ \psi(X_{t}) ] - \frac{1}{t} \psi(x) \longrightarrow 0 \text{ as $t$ tends to $\infty$}. $$
So $K=0$ and $\psi$ is harmonic with respect to $P$.
\end{proof}

\begin{prop}
\label{lem:vphi=P^k f}
Let $G$ be a group and let $P$ be the transition matrix of some random
walk on $G$, let $\alpha\in(0,1)$ and let $Q=\alpha I + (1-\alpha)P$
be the transition matrix of the corresponding lazy random walk. Let
$\psi:G \to \R$ be a function with sub-linear growth. 

Suppose that for infinitely many $k$ there exists functions $f_k:G \to
\R$ with $f_k(g)\le Ck^C|g|^C$ such that $\psi = Q^k f_k$. Then, there exists $m$ such that $(I-P)^m \psi \equiv 0$. 

Moreover, if $\psi$ grows slower than the harmonic growth of $G$ (with respect to $P$), then $\psi$ is constant.
\end{prop}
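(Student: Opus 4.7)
The plan is to exploit the binomial expansion
$$Q^k = \sum_{j=0}^{k} b_k(j)\,P^j, \qquad b_k(j) := \binom{k}{j}\alpha^{k-j}(1-\alpha)^j,$$
recognising that $b_k$ is the Binomial$(k,1-\alpha)$ mass function, so that Lemma~\ref{prop:derivative of q} (with $n=k$, $p=1-\alpha$) gives $|\partial^m b_k| \le (m/(\alpha(1-\alpha)k))^{m/2}$. The key algebraic observation is that, for fixed $x$, viewing $P^j f_k(x)$ as a sequence in $j$, the operator $I-P$ acting on the $x$ variable coincides with the forward difference $\partial_j$ acting on the $j$ variable:
$$(I-P)(P^j f_k)(x) = P^j f_k(x) - P^{j+1} f_k(x) = \partial_j\bigl(P^j f_k(x)\bigr).$$
Iterating this and interchanging with the finite $b_k$-sum gives $(I-P)^m \psi(x) = \sum_j b_k(j)\,\partial_j^m\bigl(P^j f_k(x)\bigr)$, and discrete summation by parts $m$ times transfers the differences onto $b_k$, yielding the identity $(I-P)^m \psi(x) = (-1)^m \sum_j (\partial^m b_k)(j-m)\,P^j f_k(x)$.

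I would then estimate the two factors separately. Lemma~\ref{prop:derivative of q} supplies the smoothing bound already noted, while the hypothesis on $f_k$ combined with the finite range of $P$ gives the crude estimate $|P^j f_k(x)| \le Ck^C(1+|x|+j)^C$, which is $O(k^C(1+|x|+k)^C)$ uniformly over $j$ in the support of $\partial^m b_k$. Since that support has cardinality $O(k+m)$, the two bounds combine to
$$|(I-P)^m \psi(x)| \le C\,k^{1+C}(1+|x|+k)^C\cdot \sr{\frac{m}{\alpha(1-\alpha)\,k}}^{m/2},$$
which, for fixed $x$ and $m$, is of order $k^{1+2C-m/2}$. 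Choosing $m>4C+2$ and letting $k$ run through the infinite sequence on which $\psi = Q^k f_k$ is valid, the right hand side vanishes; since the left hand side does not depend on $k$, it is identically zero, proving the first assertion.

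For the moreover statement, set $\psi_i := (I-P)^i \psi$ and note that every $\psi_i$ inherits sublinear growth from $\psi$, since $I-P$ is a bounded finite-range operator. I argue by downward induction on $i$. The base case is $\psi_m \equiv 0$, which is constant. For the induction step, if $\psi_{i+1}$ is constant then $(I-P)\psi_i = \psi_{i+1}$ is constant, so Lemma~\ref{prop:constant Laplacian} applied to the sublinear function $\psi_i$ forces this constant to be $0$ and $\psi_i$ to be harmonic; since $\psi_i$ grows strictly slower than the harmonic growth of $G$, it must be constant. In particular $\psi = \psi_0$ is constant.

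The main technical obstacle is the balance between the two competing factors in the key estimate: the polynomial growth of $f_k$ in $k$ contributes $k^{2C+1}$, and the only source of decay is the $k^{-m/2}$ smoothing coming from $m$ derivatives of the binomial. The argument closes precisely because $m$ may be chosen arbitrarily large at no cost, so a fixed polynomial is eventually beaten; any exponential dependence of $f_k$ on $k$ would defeat the method. The laziness parameter $\alpha\in(0,1)$ plays an essential role here: it is what makes $Q^k$ a genuinely smooth binomial kernel to which Lemma~\ref{prop:derivative of q} applies with nontrivial $k$-dependent rate.
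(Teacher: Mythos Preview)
Your proposal is correct and follows essentially the same route as the paper's proof. The paper states the identity $(I-P)^m\psi=\sum_j\partial^m b(j)\,P^j f_k$ directly, whereas you derive it via summation by parts, arriving at the slightly more precise $(-1)^m\sum_j(\partial^m b_k)(j-m)\,P^j f_k$; since the Lemma~\ref{prop:derivative of q} bound is uniform in the argument, the sign and shift are immaterial for the estimate. Your threshold $m>4C+2$ is in fact more careful than the paper's $m>2(C+1)$, which tacitly absorbs the $k^C$ factor from the hypothesis $|f_k(g)|\le Ck^C|g|^C$ into the exponent.
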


\begin{proof}
Observe that
$$ Q^k = (\alpha I + (1-\alpha) P)^k = \sum_{j=0}^k \binom kj 
\alpha^{k-j} (1-\alpha)^j P^j
= \sum_{j \in \N} b(j) P^j , $$
for $b(j)$ as in Lemma \ref{prop:derivative of q}, with
$p,n$ in Lemma \ref{prop:derivative of q} given by
$p=1-\alpha$ and 
$n=k$. Thus we may write $\psi=Q^kf_k$ as
\begin{equation}\label{eq:vphi bjk fk}
\psi=\sum_{j\in\N}b(j)P^jf_k.
\end{equation}
Thus, for every $k$ for which (\ref{eq:vphi bjk fk}) holds we may write
\begin{align}
| (I&-P)^m \psi(g)|
\stackrel{\textrm{(\ref{eq:vphi bjk fk})}}{=} 
\sum_{j \in \N} \p^m b(j) P^jf_k
\le \sum_{j \in \N} | \p^m b(j) | \cdot | P^j f (g) | \nonumber \\
& \stackrel{(*)}{\le}
(k+1)\cdot\sr{ \frac{ m }{ \alpha (1-\alpha)k } }^{m/2} \cdot
\sup \set{ |f (h)| \ : \  \dist_G(g,h) \leq k } \nonumber \\
& \leq \sr{ \tfrac{m}{\alpha(1-\alpha)} }^{m/2}  \cdot
(|g|+k)^{C}\cdot k^{1-m/2}. \label{eq:just kill kc}
\end{align}
The inequality marked by $(*)$ has three parts. First, we use the fact
that the sum has only $k+1$ non-zero terms to bound it by $k+1$ times
the maximal term. Second, we estimate the term
$\p^mb(j)$ using Lemma \ref{prop:derivative of q}. Third, for the
term $P^jf_k$ we note that because the generator $P$ is finitely
supported $P^jf(g)$ contains only terms with distance $\le j\le k$
from $g$, and the coefficients sum to 1, so $P^jf_k$ can be bounded by
the maximum of $f_k$ in the given ball. 

Provided that $m > 2(C+1)$, the last term in (\ref{eq:just kill kc}) converges to $0$ as $k \to \infty$. 
This implies the first part of the claim.
\medbreak
Let us now assume that $\psi$ grows slower than $\har(G)$.
Then, $(I-P)^{m-1} \psi$ also grows slower than $\har(G)$, as it is a
finite combination of translates of $\psi$.
Since $(I-P)^{m-1} \psi$ is harmonic (via the first part of the claim), this implies that it is constant.
However, because every group has harmonic growth at most linear, 
we have that $(I-P)^{m-2} \psi$ is a sub-linear function with constant Laplacian.
By Lemma \ref{prop:constant Laplacian}, we get that $(I-P)^{m-2} \psi$ is harmonic.
Iterating this reasoning, we obtain that $\psi$ is harmonic and thus constant.
\end{proof}


\begin{lem}
\label{prop:avg lamps}
Let $(X_t)_t = (\o_t,g_t)_t$ be a random walk on $L \wr G$ with step measure $\mu\wr \nu$ and define
\begin{equation}\label{eq:def Tk Er}
\begin{aligned}
T_k &: = \inf \big\{ t \geq 0 \ : \  \sum_{j=0}^t \1{ g_t = 1_G } \geq k \big\},\\
E(r) &:= \inf \big\{ t \geq 0 \ : \ \dist_G(g_t,1_G) > r \big\} . 
\end{aligned}
\end{equation}
Then, 
the random variable $\o_{T_k}(1_G)$ 
is independent of $\{\o_{T_k}(g) \}_{ g \neq 1_G}$ and of the event $\set{ T_k < E(r) }$. Furthermore, its law 
is the law of a lazy $\mu$-random walk on $L$ with laziness probability $1/2$, at time $k$.
\end{lem}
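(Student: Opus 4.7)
The plan is to realize the walk via three independent i.i.d.\ sequences and then read off the three claims from the resulting structural decomposition of the increments.

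First, I would introduce independent sequences $(\xi_t)_{t\ge 0}$ of i.i.d.\ Bernoulli$(1/2)$ variables, $(u_t)_{t\ge 0}$ i.i.d.\ with law $\nu$ on $G$, and $(s_t)_{t\ge 0}$ i.i.d.\ with law $\mu$ on $L$, and couple $(X_t)$ to these by setting its $t$-th increment to $(\mathbf{1},u_t)$ when $\xi_t=0$ and to $(\delta_{s_t},1_G)$ when $\xi_t=1$. Direct inspection shows that this increment has law $\mu\wr\nu$, so the coupling is valid. The central observation is that the lamplighter then satisfies $g_{t+1}=g_tu_t$ if $\xi_t=0$ and $g_{t+1}=g_t$ if $\xi_t=1$, so $(g_t)$ is a deterministic function of $\mathcal{P}:=\sigma(\xi_t,u_t:t\ge 0)$. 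In particular $T_k$ and $E(r)$ are $\mathcal{P}$-measurable and independent of $(s_t)$.

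For the two independence claims, I would note that the lamp at $h\in G$ is altered only at times $t$ with $g_t=h$ and $\xi_t=1$, at which moment it is right-multiplied by $s_t$. Partitioning the (random, $\mathcal{P}$-measurable) index set $\{t<T_k:\xi_t=1\}$ into $I:=\{t<T_k:g_t=1_G,\,\xi_t=1\}$ and $J:=\{t<T_k:g_t\ne 1_G,\,\xi_t=1\}$, one sees that $\o_{T_k}(1_G)$ is a function of $\mathcal{P}$ together with $(s_t)_{t\in I}$, while $\{\o_{T_k}(g)\}_{g\ne 1_G}$ and the event $\{T_k<E(r)\}$ are functions of $\mathcal{P}$ together with $(s_t)_{t\in J}$. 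Since $I,J$ are disjoint and $\mathcal{P}$-measurable and the $s_t$'s are i.i.d.\ and independent of $\mathcal{P}$, the families $(s_t)_{t\in I}$ and $(s_t)_{t\in J}$ are conditionally independent given $\mathcal{P}$; integrating out $\mathcal{P}$ would then give the claimed unconditional independence.

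For the distributional claim, the times in $\{t<T_k:g_t=1_G\}$ are exactly the stopping times $T_1<T_2<\ldots<T_{k-1}$, and at each $T_j$ the lamp at $1_G$ is right-multiplied by $1_L$ if $\xi_{T_j}=0$ and by $s_{T_j}\sim\mu$ otherwise. Invoking the strong Markov property for the i.i.d.\ triple $(\xi_t,u_t,s_t)$ at these stopping times, the pairs $(\xi_{T_j},s_{T_j})$ would be i.i.d.\ $\textrm{Bernoulli}(1/2)\otimes\mu$ samples, and their ordered product in $L$ is by definition a lazy $\mu$-walk on $L$ with laziness $1/2$ at the corresponding time (up to an off-by-one absorbed into the indexing convention for $T_k$). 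The only point I expect to need a little care is this strong Markov appeal guaranteeing that the $(\xi_{T_j},s_{T_j})$ are fresh and independent across $j$; this is routine given the explicit coupling above but should be spelled out.
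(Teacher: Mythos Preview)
The paper omits the proof, calling it elementary, so there is no argument to compare against. Your three-sequence coupling is the natural construction and most of it is correct: the conditional independence of $(s_t)_{t\in I}$ and $(s_t)_{t\in J}$ given $\mathcal{P}$ holds, and the strong-Markov appeal showing that $(\xi_{T_j},s_{T_j})_j$ are i.i.d.\ $\textrm{Bernoulli}(1/2)\otimes\mu$ is fine, which yields the distributional claim (modulo the off-by-one you already flag).

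The gap is the sentence ``integrating out $\mathcal{P}$ would then give the claimed unconditional independence.'' Conditional independence given $\mathcal{P}$ does not yield unconditional independence unless the conditional law of one side does not depend on $\mathcal{P}$; here the conditional law of $\omega_{T_k}(1_G)$ given $\mathcal{P}$ is $\omega_0(1_G)\cdot\mu^{*|I|}$, and $|I|=\sum_j\xi_{T_j}$ genuinely varies with $\mathcal{P}$. In fact the unconditional independence asserted in the lemma is \emph{false}. Take $k=2$, $g_0=1_G$, and assume $\nu(1_G)=0$. If $\xi_0=1$ then $T_2=1$, so $T_2<E(r)$ certainly and $\omega_{T_2}(1_G)=\omega_0(1_G)s_0$; if $\xi_0=0$ then $\omega_{T_2}(1_G)=\omega_0(1_G)$ while $\{T_2<E(r)\}$ holds only with some probability $p<1$. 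One computes
\[
\Pr\big(\omega_{T_2}(1_G)=\omega_0(1_G),\ T_2<E(r)\big)=\tfrac12\big(\mu(1_L)+p\big),
\]
whereas the product of the marginals is $\tfrac14(\mu(1_L)+1)(1+p)$; equality forces $(\mu(1_L)-1)(1-p)=0$.

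So the lemma as written is slightly inaccurate, and your argument breaks exactly where it should. What you \emph{have} established --- conditional independence given the base-path data $\mathcal{P}$, together with the lazy-walk law for the lamp at $1_G$ --- is precisely what the downstream application actually uses: the proof of the main theorem conditions ``on everything that the walk does outside $1_G$'' and then lets $r\to\infty$, at which point the event $\{T_k<E(r)\}$ has probability tending to $1$ and the discrepancy disappears. Your write-up becomes correct if you state the conditional version and do not attempt to upgrade it to unconditional independence.
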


The proof of this statement is elementary and will be omitted.

We finish this section with a few standard facts on recurrent
groups. Most readers would want to skip to \S\ref{sec:proof}.

\begin{lem}\label{lem:log}Let $G$ be a recurrent group, let $g\in G$ and let
  $r>|g|$. Let $E$ be the event that the random walk on $G$ starting from
  $g$ reaches distance $r$ from $1_G$ before reaching $1_G$
  itself. Then, $\Pr(E)\le C\log |g|/\log r$.
\end{lem}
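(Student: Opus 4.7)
The plan is to use Varopoulos's theorem: a finitely generated recurrent group has polynomial growth of degree at most $2$, and by Gromov's theorem is therefore virtually $\{1\}$, $\Z$, or $\Z^2$. The first two cases give the much stronger estimate $\Pr(E)\le C|g|/r$ (gambler's ruin on $\Z$, triviality in the finite case), so only the virtually-$\Z^2$ situation requires real work. Standard quasi-isometry arguments --- e.g.\ comparing Green's functions of the walk on $G$ with those of a symmetric walk on a finite-index $\Z^2$ subgroup obtained from the coset-return chain --- reduce the claim to the following statement: for a symmetric, finitely supported, irreducible random walk on $\Z^2$, the probability that the walk started at $g\in\Z^2\setminus\{0\}$ exits the ball of radius $r$ before returning to the origin is at most $C\log|g|/\log r$.

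On $\Z^2$ the natural tool is the potential kernel
\[
a(x) \;=\; \lim_{N\to\infty}\sum_{k=0}^{N}\bigl(\Pr_0(X_k=0)-\Pr_x(X_k=0)\bigr),
\]
a classical object (Spitzer) satisfying $a(0)=0$, $(I-P)a=\1{0}$ (so $a$ is harmonic off the origin), and the sharp asymptotic $a(x)=c\log|x|+O(1)$ as $|x|\to\infty$ for some explicit $c>0$. If the walk happens to be periodic one first replaces it by its $\tfrac12$-lazy version, which does not change any hitting events.

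Let $R$ be the maximal step length and set $\tau=\inf\{t\ge 1:X_t=0\text{ or }|X_t|>r\}$ for the walk started at $g$. By recurrence $\tau<\infty$ almost surely, and $a(X_{t\wedge\tau})$ is a bounded martingale because $a$ is bounded on the ball of radius $r+R$. Optional stopping then gives
\[
a(g)\;=\;\E_g\bigl[a(X_\tau)\bigr]\;=\;\Pr_g(E)\cdot\E_g\bigl[a(X_\tau)\bigm|E\bigr],
\]
since $a(X_\tau)=0$ on $E^c$. On $E$ we have $r<|X_\tau|\le r+R$, so $\E_g[a(X_\tau)\mid E]=c\log r+O(1)$; combined with $a(g)\le c\log|g|+O(1)$ this yields
\[
\Pr_g(E)\;\le\;\frac{c\log|g|+O(1)}{c\log r-O(1)}\;\le\;C\,\frac{\log|g|}{\log r}
\]
once $|g|$ and $r$ exceed absolute constants; the remaining small cases are absorbed by enlarging $C$.

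The main obstacle is making the reduction to $\Z^2$ watertight: one must ensure that the induced walk is still symmetric and finitely supported (so that Spitzer's asymptotic applies) and that distances transfer with only multiplicative distortion. This is standard but mildly tedious bookkeeping; once past it, everything collapses to the one-line application of the optional stopping theorem above. A slicker alternative, which avoids the full sharp asymptotic for $a$, is to work directly with the killed Green's function $G_r$ and use the identity $\Pr_g(T_0<T_{B_r^c})=G_r(g,0)/G_r(0,0)$ together with the two-sided bound $G_r(0,0)\asymp \log r$ and $G_r(0,0)-G_r(g,0)=O(\log|g|)$, both of which follow from the same potential theory.
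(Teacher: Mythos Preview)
Your argument follows the same outline as the paper's proof: invoke the structure theorem for recurrent groups (finite extensions of $\{0\}$, $\Z$, or $\Z^2$) and use the potential kernel $a$ in the two-dimensional case. Your optional-stopping computation on $\Z^2$ is correct and is essentially a rearrangement of the paper's construction of the exit-probability function $h=1-(b-a)/(b(0)-a(0))$; both amount to the same martingale identity.

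Where you diverge from the paper is in handling the genuinely virtually-$\Z^2$ case. The paper does \emph{not} reduce to $\Z^2$; it defines the potential kernel directly on $G$ by $a(h)=\sum_n(p_n(1_G)-p_n(h))$ and appeals to the local CLT on groups of polynomial growth to get $a(x)=c\log|x|+O(1)$, after which the $\Z^2$ argument goes through verbatim on $G$. Your proposed reduction via the coset-return chain has two soft spots. First, the induced walk on the finite-index $\Z^2$ subgroup is \emph{not} finitely supported --- the return time to a finite-index subgroup has only exponential tails --- so you cannot invoke Spitzer's result as stated; you would need a version of the potential-kernel asymptotic valid under moment hypotheses (which does exist, e.g.\ Fukai--Uchiyama, but this should be said). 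Second, the event $E$ concerns the full walk on $G$, not just its trace on $H$: exiting $B_G(r)$ may occur between returns to $H$, and the next return to $H$ need not be far from $1_G$, so translating $E$ into an event for the induced walk requires an additional argument. None of this is fatal, but it is more than ``mildly tedious bookkeeping''; the paper's direct approach sidesteps both issues.
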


%

\begin{proof}Any recurrent group contains as a subgroup of finite
  index one of ${0}$, $\Z$ or $\Z^2$, see e.g.\ \cite[Theorem 3.24]{W00}.
  The proof uses deep results by Varopoulos, Gromov, Bass, and
  Guivarc'h, see \cite{W00} for details and references. The theorem of
  Gromov has had a new proof recently, see\ \cite{Kle10, T10}.

  Let us start with the case that $G=\Z^2$. In this case it is known
  \cite[\S 4.4]{Lawler_Limic_RW} that there is a function
  $a:\Z^2\to\R$ harmonic everywhere except at $(0,0)$ and satisfying
  $a(x)=c\log|x|+O(1)$. Let $b$ be the harmonic extension of the
  values of $a$ on the boundary of the ball of radius $r$ to its
  interior (so $b(x)=c\log r+O(1)$). We see that
  $h=1-(b-a)/(b(0,0)-a(0,0))$ is harmonic on the ball except at $(0,0)$,
  is 0 at $(0,0)$, 1 on the boundary and $(\log |g|+O(1))/\log r$ at
  $g$. By the strong Markov property, $h(g)$ is exactly the probability
  sought, and the claim is proved in this case.

  The case that the group is a finite extension of $\Z^2$
  (i.e.\ that it contains a subgroup of finite index isomorphic to $\Z^2$) may be done
  similarly: the function $a$ on $G$ can be defined, as in
  \cite{Lawler_Limic_RW},  by 
\[
a(h)=\sum_{n=0}^\infty (p_n(1_G)-p_n(h)) ,
\]
where $p_n$ is the heat kernel on $G$. Since $G$ satisfies a local
central limit theorem (see e.g.\ \cite{A02}), $a$ would still satisfy
$a(x)=c\log(|x|)+O(1)$. If $G$ is a finite
extension of $\Z$ a similar argument holds except this time
$a(g)=O(|g|)$ and we get $\Pr(E)\le C/r$. If $G$ is finite, then
$\Pr(E)=0$ for $r$ sufficiently large.
\end{proof}

\begin{lem}\label{lem:Er}Let $G$ be a recurrent group and let $E(r)$ be the exit
  time from a ball of radius $r$. Then
\[
\Pr(E(r)>M)\le 2\exp(-cr^{-2}M).
\]
\end{lem}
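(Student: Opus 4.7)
The plan is a classical two-step argument: establish a diffusive exit estimate of the form $\Pr(E(r)\le Cr^2)\ge\tfrac12$ for some constant $C=C(G)$, and then iterate it via the strong Markov property to obtain exponential decay.

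For the diffusive estimate I would use the classification already invoked in the proof of Lemma \ref{lem:log}: a recurrent group $G$ contains a finite-index subgroup isomorphic to one of $\{0\}$, $\Z$, or $\Z^2$, yielding a quotient map $\pi\colon G\to\Z^d$ with $d\le 2$ and finite kernel. Set $Y_t:=\pi(X_t)$; this is a symmetric, finitely supported, mean-zero random walk on $\Z^d$. Because $\ker\pi$ is finite, the word metric on $G$ and the pull-back of the standard metric on $\Z^d$ are Lipschitz-equivalent, so controlling $E(r)$ reduces to controlling the exit time of $Y_t$ from a ball of radius comparable to $r$. On $\Z^d$ the process $|Y_t|^2-\sigma^2 t$ (with $\sigma^2$ the trace of the step covariance) is a martingale, and optional stopping at $E(r)\wedge n$, combined with the bound $|Y_{E(r)\wedge n}|\le Kr$ coming from the step being bounded, yields $\E[E(r)\wedge n]\le C'r^2$; Markov's inequality then gives $\Pr(E(r)>Cr^2)\le\tfrac12$ after an appropriate choice of $C$.

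For the iteration step, the (strong) Markov property applied at time $4Cr^2$ shows that on $\{E(r)>4Cr^2\}$ the walker sits at some $y\in B_G(r)$, and its remaining exit time from $B_G(r)$ is dominated by its exit time from the larger ball $B_y(2r)\supseteq B_G(r)$; by vertex-transitivity of the Cayley graph this exit time has the law of $E(2r)$, which by the diffusive bound exceeds $4Cr^2=C\cdot(2r)^2$ with probability at most $\tfrac12$. Iterating $k$ times yields $\Pr(E(r)>4kCr^2)\le 2^{-k}$, and taking $k=\lfloor M/(4Cr^2)\rfloor$ produces the claimed bound $\Pr(E(r)>M)\le 2\exp(-cM/r^2)$. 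The only delicate step is transporting the martingale estimate on $\Z^d$ back to $G$, which requires the Lipschitz equivalence of metrics and the boundedness of the $\pi$-projected steps; both are immediate from $\ker\pi$ being finite and the step distribution $\nu$ having finite support.
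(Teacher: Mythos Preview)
Your two-step architecture --- a diffusive estimate $\Pr(E(r)>Cr^2)\le\tfrac12$ followed by iteration via the strong Markov property and the inclusion $B(1_G,r)\subset B(y,2r)$ for $y\in B(1_G,r)$ --- is exactly the paper's, and your iteration step matches theirs almost verbatim.

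The gap is in the diffusive step. A finite-index subgroup $H\cong\Z^d$ does \emph{not} give a surjection $\pi\colon G\to\Z^d$ with finite kernel; the natural quotient runs the other way, $G\to G/H$, and is finite. Concretely, the infinite dihedral group $D_\infty=\langle s,t\mid s^2=t^2=1\rangle$ is virtually $\Z$ (the subgroup $\langle st\rangle$ has index $2$), yet every homomorphism $D_\infty\to\Z$ is trivial since both generators have order $2$. So the projected walk $Y_t=\pi(X_t)$ and the martingale $|Y_t|^2-\sigma^2 t$ are simply unavailable in general, and the Lipschitz comparison you invoke has nothing to compare to.

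This is repairable along your lines. Pass to the normal core of $H$ (still $\cong\Z^d$, being finite index in $\Z^d$); the coset walk on the finite group $G/H$ is then a genuine irreducible random walk, so the return times of $X_t$ to $H$ have exponential tails, and the \emph{induced} walk $(X_{\tau_k})_k$ on $H\cong\Z^d$ is a symmetric random walk with exponential moments. Your second-moment/optional-stopping argument now applies to this induced walk and transfers back via the quasi-isometry $H\hookrightarrow G$; the bounded-overshoot step needs a small adjustment since the induced step is no longer boundedly supported. The paper takes a different, less elementary route for the diffusive estimate: it combines the weak Poincar\'e inequality (valid on every Cayley graph) with volume doubling to invoke Delmotte's theorem, obtaining the on-diagonal bound $p_t(x,y)\le C/|B(\sqrt t)|$; summing over $B(1_G,2r)$ at time $C_1r^2$ shows that at most half the mass remains, which is the required diffusive input.
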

\begin{proof}Random walk on any group satisfies the weak Poincar\'e
  inequality, see \cite[4.1.1]{PSC99} for the statement of the
  inequality and the proof. Since, as in the proof of the previous
  lemma, it is a finite extension of $\{0\}$, $\Z$ or $\Z^2$, it also
  satisfies volume doubling i.e.\ $|B(2r)|\le C|B(r)|$. This means, by
  Delmotte's theorem \cite{D99} that $p_t(x,y)\le
  C/|B(\sqrt{t})|$. Summing this inequality we see that after time
  $C_1r^2$ for some $C_1$ sufficiently large the probability to stay in a
  ball of radius $2r$ is $\le\frac 12$. This means that if in time $t$
  you are at some $g\in B(1_G,r)$ then by time $t+C_1r^2$ you have
  probability $\ge\frac 12$ to exit $B(g,2r)\supset B(1_G,r)$. In
  the language of $E(r)$ this means that 
\[
\Pr(E(r)>t+C_1r^2\,|\,E(r)>t)\le\frac 12.
\]
The lemma follows readily.
\end{proof}
\begin{lem}\label{lem:save}
Recall the definition of $T_k$ and $E(r)$ from \eqref{eq:def Tk Er}.
For every $k\ge 1$, every $M\ge 0$ and every starting point $x=(\o,g) \in L \wr G$,
we have that 
$\log(E(r)+M)$ is integrable and 
\begin{equation}\label{eq:save}
\E_x[\log(E(r)+M)\mathbf{1}_{\{E(r)\le T_k\} }]<
Ck\frac{\log(r+M)\log(|g|)}{\log r} .
\end{equation}
\end{lem}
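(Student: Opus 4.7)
The plan is to decompose the event $\{E(r)\le T_k\}$ according to the excursions of the $G$-projection of the walk. Under the move-or-switch measure $\mu\wr\nu$, the projection $(g_t)_t$ is itself a lazy random walk on the recurrent group $G$, so both Lemmas \ref{lem:log} and \ref{lem:Er} apply to it directly; the lamp configuration plays no role. I may assume $|g|<r$, for otherwise $E(r)=0$ and the inequality is immediate from $\log|g|\ge \log r$.

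The first step is to bound $\Pr_x(E(r)\le T_k)$. By definition, by time $T_k$ the $G$-projection has visited $1_G$ exactly $k$ times. I partition the trajectory into the initial (possibly partial) excursion from $g$ to the first visit, together with the excursions between consecutive visits, each of which either stays at $1_G$ or leaves to a neighbor of $1_G$ and returns. By Lemma \ref{lem:log} combined with the strong Markov property, the initial excursion exits $B(1_G,r)$ before returning to $1_G$ with probability at most $C\log|g|/\log r$, while each subsequent non-trivial excursion does so with probability at most $C/\log r$. A union bound over the at most $k$ excursions yields
\[
\Pr_x(E(r)\le T_k)\le \frac{Ck\log|g|}{\log r}.
\]

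Next I split the logarithm. Since $|g|<r$ forces $E(r)\ge 1$, the elementary inequality $\log(E(r)+M)\le\log E(r)+\log(M+1)$ holds, so
\[
\E_x\bigl[\log(E(r)+M)\mathbf{1}_{\{E(r)\le T_k\}}\bigr]\le\log(M+1)\,\Pr_x(E(r)\le T_k)+\E_x\bigl[\log E(r)\,\mathbf{1}_{\{E(r)\le T_k\}}\bigr].
\]
The first term is directly bounded by $Ck\log(M+1)\log|g|/\log r\le Ck\log(r+M)\log|g|/\log r$. For the second term I use the layer-cake formula
\[
\E_x\bigl[\log E(r)\,\mathbf{1}_{\{E(r)\le T_k\}}\bigr]=\int_0^\infty\Pr_x(E(r)>e^s,\,E(r)\le T_k)\,ds,
\]
and split at $s_0=2\log r$: for $s\le s_0$ I bound the integrand by $\Pr_x(E(r)\le T_k)\le Ck\log|g|/\log r$, contributing at most $2Ck\log|g|$; for $s>s_0$ I invoke Lemma \ref{lem:Er} in the form $\Pr_x(E(r)>e^s)\le 2\exp(-ce^s/r^2)$, contributing $O(1)$. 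Summing, and using $\log r\le\log(r+M)$, yields the claimed bound, while integrability of $\log(E(r)+M)$ follows from the same exponential tail.

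I expect the main obstacle to be controlling the contribution of atypically large values of $E(r)$. Although $E(r)$ is concentrated around the scale $r^2$, nothing prevents it from being much larger, so no deterministic bound on $\log E(r)$ is available. The two-regime split of the layer-cake integral, at the typical time scale $r^2$, is designed precisely to trade the crude probability bound at small scales against Lemma \ref{lem:Er}'s exponential tail at large scales; without this step one cannot absorb the contribution of the tail into the right-hand side of \eqref{eq:save}.
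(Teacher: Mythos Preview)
Your proof is correct and follows essentially the same strategy as the paper: both split at the typical time scale for $E(r)$, using the bound $\Pr_x(E(r)\le T_k)\le Ck\log|g|/\log r$ below the threshold and the exponential tail from Lemma~\ref{lem:Er} above it. The paper implements the split via a dyadic decomposition of $\{E(r)\ge r^3\}$ rather than your layer-cake integral cut at $e^{s_0}=r^2$, but this is a cosmetic difference.
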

\begin{proof}
The integrability clause is an immediate corollary of Lemma
\ref{lem:Er} so we move to prove (\ref{eq:save}). Write
\begin{equation}\label{eq:divide Er}
\E[\cdot]=\E[\cdot\mathbf{1}_{ \{E(r)<r^3\} } ]+\sum_{i=0}^\infty
\E[\cdot\mathbf{1}_{ \{E(r)\in[r^32^i,r^32^{i+1})\} } ].
\end{equation}
In the first term, the integrand $\log( E(r)+M) \1{ E(r) < r^3 }$ is bounded by $\log(M+r^3)\le
3\log(M+r)$ and the probability of the event $\{ E(r) \leq T_k \}$ is at most $Ck \log(|g|)/\log
r$ by 
Lemma \ref{lem:log}. 
For the second
term in (\ref{eq:divide Er}), we drop the condition $E(r)\le T_k$ and
write
\begin{multline*}
\E\Big[\log(E(r)+M)\cdot
  \1{E(r)\le T_k}\cdot\1{E(r)\in[r^32^i,r^32^{i+1})} \Big]\le\\
\le \log(r^32^{i+1}+M)\cdot \Pr[E(r)\ge r^32^i]
\le \log(r^32^{i+1}+M)\exp(-cr2^i) ,
\end{multline*}
which may be readily summed over $i$ and the sum is bounded by
$C\log(r+M)/\log r$. The lemma is thus proved.
\end{proof}

\subsection{The main step}\label{sec:proof}

We proceed with the proof of Theorem \ref{thm:no sub log on LL}.
Throughout this section, we fix a group $L$ 
with $\har(L) \succeq \log$,
and a recurrent group $G$. We fix $x\in L\wr G$ for the rest of
the proof.
%
%
%

For $\ell \in L$, let $\phi_\ell:L \wr G \to L \wr G$ be the function
that changes the status of the lamp at $1_G$ to $\ell$, leaving all
other lamps unchanged. Formally,
\[
\phi_\ell(\sigma,g) = (\tau,g)\textrm{ with }\tau(k) = 
\begin{cases}
\sigma(k) & \textrm{if } k \neq 1_G\\
\ell & \textrm{otherwise.}
\end{cases}
\]
We note immediately that $\phi$ does not change distances by much:
\begin{equation}\label{eq:phidist}
|\phi_\ell(g)|\le |g|+|\ell| ,
\end{equation}
which holds for our specific choice of generators. 
\begin{dfn}\label{def:fk}
Let $h:L \wr G \to \R$ be a harmonic function of sub-logarithmic growth.
For $k\ge 1$ and $\ell\in L$ define
\begin{equation}\label{eq:deffk}
f_k(\ell) = \lim_{r\to\infty}\E_x^{L\wr G}[ h(\phi_\ell(X_{
    T(k) \wedge E(r)})) ].
\end{equation}
where $T(k)$ and $E(r)$ are defined by (\ref{eq:def Tk Er}). Note that
$f_k$ depends also on $h$ and $x$, but these will be suppressed in the
notation.
\end{dfn}
It is not clear a-priori that $f_k$ is well-defined as we
have not shown that $h(\phi_\ell(X_{T(k) \wedge E(r)}))$ is integrable, nor that
the limit exists. We will show this in 
Proposition \ref{lem:k-th return} below, and, more importantly, give a
useful estimate on $f_k$.
\begin{prop}
\label{lem:k-th return}
For every $k\ge 1$, $f_k$ is well-defined and satisfies
$$ |f_k(\ell)| \leq C \log (|x|+|\ell|) \cdot k^3 , $$
for some constant $C>0$ (which may depend on $h$).
\end{prop}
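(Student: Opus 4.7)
My plan has four steps: (1) establish integrability of the integrand for each fixed $r$; (2) derive a key identity relating $A_r(m):=\E_x[h(\phi_m(X_{T_k}))\mathbf 1_{T_k\le E(r)}]$ to $\psi(\ell):=h(\phi_\ell(x))$ via the operator $Q^k$, using optional stopping and Lemma~\ref{prop:avg lamps}; (3) bound the resulting error terms using Lemma~\ref{lem:save}; and (4) bootstrap to extract the stated bound on $f_k$ and pass to the limit in $r$.

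Integrability will follow from (\ref{eq:phidist}), the bound $|X_t|\le|x|+t$, the sublogarithmic estimate $|h(y)|\le C\log(|y|+2)$, and the exponential tail of $E(r)$ given by Lemma~\ref{lem:Er}. For the key identity, I will apply OST at the bounded stopping time $T_k\wedge E(r)$ to the martingale $h(X_t)$ started from $\phi_\ell(x)$, justified by sublogarithmic growth combined with the exponential tail of $E(r)$ via truncation, to get $h(\phi_\ell(x))=\E_{\phi_\ell(x)}[h(X_{T_k\wedge E(r)})]$. Coupling the walks from $x$ and $\phi_\ell(x)$ with identical moves and switches makes them differ only at the lamp at $1_G$, and Lemma~\ref{prop:avg lamps} identifies the marginal law of that lamp at time $T_k$ on $\{T_k\le E(r)\}$ as $Q^k(\ell,\cdot)$, independent of the rest of the configuration. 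A change of variables then yields
\[
Q^kA_r(\ell) = \psi(\ell) - C_r(\ell),\qquad C_r(\ell):=\E_{\phi_\ell(x)}[h(X_{E(r)})\mathbf 1_{E(r)<T_k}].
\]
Lemma~\ref{lem:save} applied with $M=|x|+|\ell|+2$ and $g$ the base component of $\phi_\ell(x)$, combined with sublogarithmic growth of $h$, gives $|C_r(\ell)|\le Ck\log(|x|+|\ell|)$ uniformly for $r$ large; the analogous estimate controls $B_r(\ell):=\E_x[h(\phi_\ell(X_{E(r)}))\mathbf 1_{E(r)<T_k}]$. Hence $Q^kA_r$ is sublogarithmic in $\ell$ with constant $O(k)$.

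The main obstacle is to upgrade the bound on $Q^kA_r$ to a bound on $A_r$ itself, since the naive estimate $|A_r(\ell)|\le C\E[\log(|x|+E(r)+|\ell|+2)]$ blows up logarithmically in $r$. I plan to decompose $A_r=Q^kA_r+(I-Q^k)A_r$ and estimate
\[
(I-Q^k)A_r(\ell)=\sum_mQ^k(\ell,m)\,\E_x\bigl[(h(\phi_\ell(X_{T_k}))-h(\phi_m(X_{T_k})))\mathbf 1_{T_k\le E(r)}\bigr],
\]
exploiting that every $m$ in the support of $Q^k(\ell,\cdot)$ satisfies $|\ell m^{-1}|_L\le k$, so that each $h$-difference is an oscillation of $h$ across two lamplighter states that agree outside the lamp at $1_G$ and whose lamp-$1_G$ values differ by at most $k$ in the $L$-metric. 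Sublogarithmic growth then turns each such oscillation into a bound of order $\log(|x|+|\ell|+k)$ on the configuration, while a second invocation of Lemma~\ref{lem:save} (trading the $\log r$ factor against a factor $\log|g|/\log r$ on the event $\{T_k\le E(r)\}$) is used to eliminate the residual $\log r$ blow-up via harmonicity-driven cancellation. The exponent $k^3$ is expected to arise from combining the $k$-diameter of $\mathrm{supp}(Q^k)$, the $\log k$ absorbed into sublog growth, and one iteration required to control the defect produced by the frozen lamp at $1_G$. Once $|A_r(\ell)|+|B_r(\ell)|\le Ck^3\log(|x|+|\ell|)$ uniformly in $r$, dominated convergence using $\mathbf 1_{T_k\le E(r)}\uparrow 1$ (by recurrence of $G$) gives convergence of $E_r^{(\ell)}$ to $f_k(\ell)$ and propagates the bound.
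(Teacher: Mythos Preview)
Your steps (1)--(3) are sound and indeed the identity $Q^kA_r=\psi-C_r$ (with $|C_r|\le Ck\log(|x|+|\ell|)$ via Lemma~\ref{lem:save}) is correct; the paper uses exactly this relation in the final assembly of the proof of Theorem~\ref{thm:no sub log on LL}. The gap is in step (4), where you try to pass from a bound on $Q^kA_r$ to a bound on $A_r$.

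The decomposition $A_r=Q^kA_r+(I-Q^k)A_r$ reduces matters to controlling
\[
\sum_m Q^k(\ell,m)\,\E_x\bigl[\,|h(\phi_\ell(X_{T_k}))-h(\phi_m(X_{T_k}))|\,\mathbf 1_{T_k\le E(r)}\bigr].
\]
Sublogarithmic growth gives only $|h(y_1)-h(y_2)|\le C\log(|y_1|\vee|y_2|)$, so the integrand is bounded by $C\log(|x|+|\ell|+T_k)$. But on $\{T_k\le E(r)\}$ you have no smallness to trade: $\Pr(T_k\le E(r))\to 1$, and for a $\Z^2$ base $\E[\log T_1]=\infty$, so the truncated expectation is $\approx\log r$ and does not stay bounded. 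Lemma~\ref{lem:save} is stated for the \emph{complementary} event $\{E(r)\le T_k\}$ and gives nothing here; your ``harmonicity-driven cancellation'' is not a concrete mechanism---the map $m\mapsto h(\phi_m(X_{T_k}))$ is \emph{not} $\mu$-harmonic on $L$ (harmonicity of $h$ on $L\wr G$ mixes lamp moves with base moves), so there is no cancellation to invoke.

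The paper resolves this with a genuinely different idea: reorder the $k$ excursions from $1_G$ so that the one of largest height comes last (Lemma~\ref{lem:exchange}), write $X_{T_k}\stackrel{d}{=}WV$, and then apply optional stopping to $h$ \emph{along the last (longest) excursion} at its first return to $1_G$. This replaces $\E[h(\phi_\ell(WV))]$ by $h(y)$ with $y=\phi_\ell(W)V_1$, plus an error living on $\{E(r)<T_k\}$ (where Lemma~\ref{lem:save} \emph{does} apply). The point is that $|y|$ is now governed by the height of the \emph{second}-longest excursion, whose tail $\Pr(\ge s)\le Ck^2/\log^2 s$ is summable against $\log s$; this is where the uniform-in-$r$ bound and the $k^3$ come from. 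Your scheme never isolates the longest excursion and therefore cannot avoid the $\log r$ blow-up.
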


\begin{proof}
Denote 
\[
M(r)=\sup\left\{\frac{|h(x)|}{\log(|x|)}:|x|\ge r\right\}
\]
and note that $M$ is decreasing in $r$ and $M(r)\to 0$ as
$r\to\infty$. We first reduce the problem by noting that
\begin{equation}\label{eq:first}
\lim_{r\to\infty}\E_x[h(\phi_\ell(X_{E(r)}))\cdot\1{E(r)<T_k}]=0.
\end{equation}
To see (\ref{eq:first}), first note that
\[
r\le |\phi_\ell(X_{E(r)})|
\stackrel{\smash{\textrm{(\ref{eq:phidist})}}}{\le}
|X_{E(r)}|+|\ell|\le E(r)+|x|+|\ell|
\]
and hence
\begin{align*}
h(\phi_\ell(X_{E(r)})) & \le
M( |\phi_\ell(X_{E(r)}) | ) \log(|\phi_\ell(X_{E(r)})|) \\
& \le M(r)\log(E(r)+|x|+|\ell|).
\end{align*}
Taking expectation (and assuming $r>|x|$), we get
\begin{align}
\E_x[h(\phi_\ell&(X_{E(r)}))\cdot\mathbf{1}\{E(r)<T_k\}]\nonumber\\
&\le M(r)\E_x[\log(E(r)+|x|+|\ell|)\cdot\mathbf{1}\{E(r)<T_k\}]\nonumber\\
&\stackrel{\textrm{(\ref{eq:save})}}{\le}
M(r)\cdot Ck\frac{\log(r+|x|+|\ell|)\log(|x|)}{\log r}\to 0
\quad\textrm{as $r\to\infty$}\label{eq:new drop Er}
\end{align}
proving (\ref{eq:first}). A similar calculation shows that the
variables integrated over in the definition of $f_k$ (\ref{eq:deffk})
are indeed integrable. All similar quantities (i.e.\ that involve only
the walk in the ball of radius $r$ in $G$) are proved to be integrable
using the same argument so we will not return to this point later on.

At some point during the proof, it will be convenient to assume that
the walk is not degenerate i.e.\ does not spend all its time at $1_G$
(in particular the degenerate case can happen
only if the $G$-component of the starting point $x$ is $1_G$). Since the contribution
of this event is clearly bounded by $Ce^{-ck}\log(|x|+|\ell|)$ and is
independent of $r$, we will remove it now. Define therefore
\begin{align}
\Aa & :=\{\textrm{the walk spends all its time at $1_G$}\} \nonumber \\
\Bb & :=\{T_k<E(r)\}\setminus\Aa\nonumber\\
f(r) & :=\E_x[h(\phi_\ell(X_{T_k}))\cdot
  \mathbf{1}_{\Bb}].\label{eq:defAa}
\end{align}
Due to the previous discussion, the proposition will be proved once we show
that $f(r)$ converges, and that $\lim f(r)\le Ck^3\log(|x|+|\ell|)$.

Define now $g_t$ to be the position of the lighter at time $t$ (or the
$G$-component of $X_t$ if you want) and define
\[
\Lambda_j=\max\{|g_t|:t\in[T_j,T_{j+1}]\} ,
\]
where $T_j$ are still defined by (\ref{eq:def Tk Er}). We call
$\Lambda_j$ the {\em height} of the $j^\textrm{th}$ excursion. 
We need to
single out the excursion with the largest height (denote it by
$i$ --- if there are ties take the last longest walk). Define therefore the following two random elements of $L\wr G$,
\[
V=X_{T_i}^{-1}X_{T_{i+1}}^{\vphantom{-1}}\qquad 
W=X_{T_i}^{\vphantom{-1}}X_{T_{i+1}}^{-1}X_{T_k}^{\vphantom{-1}}.
\]
In words, $V$ is the excursion of largest height and $W$ are all the
rest. We note the following
\begin{lem}\label{lem:exchange}
Under $\Bb$ the variables $X_{T_k}$ and $WV$ have the same distribution.
\end{lem}
\begin{proof}Since the $G$ component of all $X_{T_i}$ is $1_G$ then we
  need only consider the lamps. However, whether we take the steps of
  the random walk in the original order ($X_{T_k}$) or with the largest excursion
  taken out and performed in the end ($WV$), each lamp is visited exactly the
  same number of times. So conditioning on the steps in the $G$
  direction, each lamp does a simple random walk on $L$ of equal
  length (we use here that the event $\Bb$ depends only on the $G$ component). This shows that $X_{T_k}$ and $WV$ have the same
  distribution after conditioning on the walk in the $G$
  direction. Integrating gives the lemma.
\end{proof}
In particular,
\[
f(r)=\E[h(\phi_\ell(WV))\cdot\mathbf{1}_{\Bb}].
\]
Condition on $W$ and examine $V$. It is the
value of simple random walk on $L\wr G$, conditioned to have larger
height that all other excursions, at the time when it first returns to
$1_G$. Examine the time $\tau$ when the walker ``knows'' this
excursion is the longest (this could be either the time when it
reaches the same height as the highest excursion in $W$, or when it
surpasses it, depending on how one resolves ties, but in all cases it
is a stopping time). We also modify $\tau$ in the degenerate case that
all excursions in $W$ stay in $1_G$ and require from $\tau$ to be
at least $X_{T_i}+1$ even if the walker knows it was the largest already at time
$X_{T_i}$, so that the walker also knows it did not spend all time in
$1_G$. After $\tau$, the walk is a simple random walk,
unconditioned. Write $V=V_1V_2$ with 
\[
V_1=X_{T_i}^{-1}X_\tau^{\vphantom{-1}}\qquad 
V_2=X_\tau^{-1}X_{T_{i+1}}^{\vphantom{-1}}
\]
and condition also on $V_1$. Write
\begin{equation*}
f(r)=
\E_x[\E[h(\phi_\ell(WV_1V_2))\cdot\mathbf{1}_{\Bb}\,|\,i,W,V_1]].
\end{equation*}
We notice two facts. First, the condition $\neg\Aa$ (recall that $\Aa$
is our degenerate event, see (\ref{eq:defAa})) affects only $W$ and
$V_1$, and can be taken from the inner expectation to the
outer. Second, we can write $\phi_\ell(WV_1V_2)=\phi_\ell(W)V_1V_2$
because the value of the lamp at $1_G$ is changed only in excursions
of height 0 (here we use $\neg\Aa$). Denote $y=\phi_\ell(W)V_1$. We get
\begin{equation}\label{eq:condition on WV1}
f(r)=\E_x\left[\E\big[h(yV_2)\cdot\1{E(r)>T_k}\,|\,i,W,V_1\big]\mathbf{1}_{\neg\Aa}\right].
\end{equation}
We now apply the strong Markov property at the stopping time
$\tau$. The event $E(r)>T_k$ for the ``external'' random walk becomes
$E(r)>T_1$ for the random walk after $\tau$, and $V_2$ becomes
$X_{T_1}$. Hence
\[
\E[h(yV_2)\cdot\1{E(r)>T_k}]
=\E_{y}[h(X_{T_1})\cdot\1{E(r)>T_1}] .
\]
It is time to use the fact that $h$ is harmonic on $L\wr G$. We write
\begin{multline*}
\E_y[h(X_{T_1})\1{E(r)>T_1}] 
=\E_y[h(X_{E(r) \wedge T_1})] -
\E_y[h(X_{E(r)})\1{T_1>E(r)}]
\end{multline*}
(of course $E(r)$ and $T_1$ cannot be equal). 
Since $h$ is harmonic, the process $(h(X_{t \wedge T_1 \wedge E(r)}))_t$ is a
martingale. Further, we may use the bounded convergence theorem
because
\begin{multline*}
\sup_t|h(X_{t \wedge T_1 \wedge E(r)})|\le \max_{t\le
  E(r)}|h(X_t)|\le\\
\le \max_{t\le E(r)}C\log(|X_t|)\le C\log(|y|+E(r))
\end{multline*}
which is integrable, by Lemma \ref{lem:save}. We get
\[
\E_{y}[h(X_{T_1 \wedge E(r)})] =h(y).
\]
Inserting this into (\ref{eq:condition on WV1}) gives
\[
f(r)=
\E_x\left[\Big(h(y)-
  \E_y[h(X_{E(r)})\1{T_1>E(r)}]\Big)\mathbf{1}_{\neg\Aa}\right].
\]
It will be convenient to add the condition that the height of the
second-highest excursion is $\le r$. We may do so because otherwise
$|y|> r$, in the inner expectation the walker is stopped immediately
($E(r)=0$) and the inner expectation itself is exactly $h(y)$ and the term
contributes zero. Denote 
$\Cc=\{$the second-highest excursion
  is $\le r\}\setminus\Aa$. 
We write 
\[
II = f(r)- I \qquad I=\E_x[h(y)\cdot\mathbf{1}_{\Cc}] \qquad II=\textrm{the rest}
\]
and bound these terms individually.

Let us start with the second term, which can be reasonably considered
to be the error term. We reverse the use of the Markov property and get
\[
\E_y[h(X_{E(r)})\1{T_1>E(r)}] =
\E[h(yV_3)\1{E(r)<T_k}\,|\,i,W,V_1] ,
\]
where $V_3$ is the part of $V_2$ until the first time it exits the
ball of radius $r$ in $G$ (recall that $i$ denotes the excursion of largest height). 
Recall that $y=\phi_\ell(W)V_1$ and that
$V_1$ is a random walk on $L\wr G$ conditioned to be longer than all
excursions in $W$ and stopped when it knows it is. Hence
$V_1V_3$ is simply a random walk conditioned to be longer than all
excursions in $W$. Returning the integration over $W$ and
$V_1$, we get
\begin{equation}\label{eq:nomoreMarkov}
II=-\E_x[h(\phi_\ell(W)V_1V_3)\mathbf{1}_{\Dd}]
\end{equation}
where $\Dd$ is the event that all excursions except for the longest
did not exit the ball of radius $r$ (this part was $\Cc$), while the
longest did exit it (this is $E(r)<T_k$).
Now, write
\[
|\phi_\ell(W)V_1V_3|
\stackrel{\smash{\textrm{(\ref{eq:phidist})}}}{\le}
|\ell|+|W|+|V_1V_3|.
\]
The expression $|W|+|V_1V_3|$ allows us to get rid of the conditioning
in $V_1V_3$. We can now claim that $|W|+|V_1V_3|$ is bounded by the sum of
lengths of $k$ excursions exactly one of which exits the ball of radius $r$ in
$G$. Denoting by $\Dd_i$ the event that the $i^\textrm{th}$ excursion
is the one that exits the ball of radius $r$, we may write, under
$\Dd_i$,
\[
|W|+|V_1V_3|\le |x|+E(r)+|T_k|-|T_{i+1}|
\]
and then
\begin{align*}
\E[\log(&|\phi_\ell(W)V_1V_3|)\cdot\mathbf{1}_{ \Dd_i }]\\
&\le \E[\log(|x|+|\ell|+E(r))\cdot\mathbf{1}_{ \Dd_i }]+
\E[\log(T_k-T_{i+1})\cdot\mathbf{1}_{ \Dd_i }]
\end{align*}
The first term may be estimated by Lemma \ref{lem:save}
to be at most
$$ Ci\log(r+|x|+|\ell|)\log(|x|)/\log r . $$ 
For the second term we note
that the effect of $\Dd_i$ on the random walk after $T_{i+1}$ is just
to prohibit exiting the ball of radius $r$ and then 
\begin{align*}
\E[\log(T_k-T_{i+1})\cdot\mathbf{1}_{ \Dd_i }] & \\
& \le
\E[\log(T_{k-i-1})\cdot\mathbf{1}\{T_{k-i-1}<E(r)\}]\cdot\Pr[E(r)<T_{i+1}]
\le Ci.
\end{align*}
where the last inequality estimates $\E[\cdot] \le C\log r$ using Lemma \ref{lem:Er} and $\Pr[\cdot]\le Ci/\log r$ by Lemma \ref{lem:log}.

Combining both parts we may write
\begin{align*}
II\,&
\stackrel{\textrm{\clap{(\ref{eq:nomoreMarkov})}}}{=}
-\E_x[h(\phi_\ell(W)V_1V_3)\mathbf{1}_{ \Dd }] \\
&\le M(r)\E_x[\log(|\phi_\ell(W)V_1V_3|)\mathbf{1}_{ \Dd }] \\
&\le M(r)\sum_{i=1}^k\E_x[\log(|\phi_\ell(W)V_1V_3|)\mathbf{1}_{ \Dd_i }] \\
& \stackrel{(*)}{\le} M(r)\sum_{i=1}^k\left(\frac{Ci\log(r+|x|+|\ell|)\log(|x|)}{\log
  r}+Ci\right) \\
&\le C(x,\ell,k) M(r)
\end{align*}
where in $(*)$ we applied the previous discussion. In particular $II\to 0$ as $r\to\infty$.

We now move to the estimate of $I$. For this, we denote by $\Ee_s$,
$s>2$ the
event that the height of the second-heighest excursion is in
$[s,s^2)$ and for $s=2$, replace $[2,4]$ with $[0,4]$. Denote 
\[
I_s:=\E_x[h(y)\cdot\mathbf{1}\{\Cc,\Ee_s\}] 
\]
Now, the event $\Ee_s$ has probability $\le Ck^2/\log^2 s$ since it
requires two of the $k$ excursions to reach height $s$. On the other
hand, 
$$ |y|\le|x|+|\ell|+ \textrm{ the total time of the
process } . $$ 
Define $U$ to be the sum of the lengths of the first two
excursions to $s^2$ i.e.
\begin{align*}
U_1&=E(s^2) & U_2&=\min\{t>U_1:g_t=1_G\}\\
U_3&=\min\{t>U_2:|g_t|>r\} &U&=U_3-U_2+U_1.
\end{align*}
Then, under $\Ee_s$, the total time of
the process is bounded by $U$, and we may write
\[
|y|\le |x|+|\ell|+U.
\]
By Lemma \ref{lem:Er} $\E(U)\le s^4$ and $U$ has exponential concentration. 
Any positive variable with exponential
concentration, when conditioned over an event of probability $p$, can
``gain'' no more than $|\log p|$ by the conditioning. Hence, we get
\begin{align}
I_s&\le \E_x[C\log(|x|+|\ell|+U)\cdot\mathbf{1}\{\Cc,\Ee_s\}]\nonumber\\
&\le\frac{Ck^2}{\log^2 s}\cdot \log(|x|+|\ell|+s^4)\left|\log\left(\frac{Ck^2}{\log^2 s}\right)\right|\nonumber\\
&\le Ck^3 \log(|x|+|\ell|)\frac{\log \log s}{\log s}.\label{eq:Is}
\end{align}
This shows that 
\[
I=\sum_{n=0}^\infty I_{2^{2^n}}
\stackrel{\smash{\textrm{(\ref{eq:Is})}}}{\le} 
\sum_{n=0}^\infty Ck^3\log(|x|+|\ell|)\frac{n}{2^n}
\]
and in particular that $I$ is bounded by $Ck^3\log(|x|+|\ell|)$ independently
of $r$. Furthermore, it is clear that $I_{2^{2^n}}$ is independent of $r$
as long as $r>2^{2^{n+1}}$, since the event that the second-highest
excursion is not larger than $2^{2^{n+1}}$ implies that neither $y$ nor $\Cc$
depend on $r$. So we get that $I$ converges as $r\to\infty$, and
its limit is bounded by $Ck^3\log(|x|+|\ell|)$. As $II\to 0$ as $r \to \infty$, the
proposition is proved.
\end{proof}

%
%
%
%
We now complete the proof of Theorem \ref{thm:no sub log on LL}.

\begin{proof}[Proof of Theorem \ref{thm:no sub log on LL}]
Fix $x=(\o,g) \in L\wr G$ 
and define a function $\psi:L \to \R$ by 
$\psi(\ell) = h(\phi_\ell(x) )$.  We wish to relate $\psi$ and $f_k$
(from Definition \ref{def:fk} with the same $x$ and $h$). Let
therefore $k\ge 1$ and $r$ sufficiently large and examine 
$h(X_{T_k})$ under the event $T_k<E(r)$. At every visit to $1_G$, the
lamp there has probability $\frac 12$ to move. Hence, at $T_k$ the
distribution of the lamp is exactly that of the lazy random walk on $L$
after $k$ steps. This means that, conditioning on everything that the
walk does outside $1_G$,
\[
h(X_{T_k})\sim Q^k(h(\phi_{\o(1_G)}(X_{T_k}))) ,
\]
where $Q= \tfrac12 (I+P)$ is the transition matrix of the lazy
$P$-random walk on $L$, with laziness probability $\frac 12$ (the
operand of $Q$ above, i.e.\ $h(\phi_{\o(1_G)}(X_{T_k}))$, is considered as
a function of $\o(1_G)$ i.e.\ of the status of the lamp at $1_G$ of the starting point). Taking
expectations (still under $T_k<E(r)$), we get
\[
\E_x[h(X_{T_k})\mathbf{1}\{T_k<E(r)\}] =
Q^k (\E_x[h(\phi_{\o(1_G)}(X_{T_k}))\mathbf{1}\{T_k<E(r)\}]).
\]
Because $h$ is sub-logarithmic in growth, we know that
$h(X_{E(r)})\cdot\mathbf{1}\{E(r)<T_k\}$ goes to 0 as $r\to\infty$
(see the beginning of the proof of Proposition \ref{lem:k-th
  return}). So we get for the left-hand side,
\[
\lim_{r\to\infty}\E_x[h(X_{T_k})\mathbf{1}\{T_k<E(r)\}] =
\lim_{r\to\infty}\E_x[h(X_{E(r) \wedge T_k})] = h(x)
\]
where in the last equality we used that $h$ is harmonic on $L\wr
G$. For the right-hand side, we get
\begin{multline*}
\lim_{r\to\infty}\E_x[h(\phi_\ell(X_{T_k}))\mathbf{1}\{T_k<E(r)\}]
=\lim_{r\to\infty}\E_x[h(\phi_\ell(X_{T_k \wedge E(r)}))]=f_k(\ell).
\end{multline*}
This gives the sought-after relation,
\[
\psi=Q^kf_k.
\]
Using Proposition \ref{lem:vphi=P^k f} and the facts that $\psi$ grows sub-logarithmically
and $\har(G)  \succeq \log (\cdot)$, we obtain that $\psi$ is constant.
Therefore, $h(\phi_\ell(x)) = h(x)$ for all $\ell$ and all $x$;
that is, $h$ does not depend on the status of the lamp at $1_G$.

We may repeat this argument for the lamps at other elements of $G$ by translating $h$.
We conclude that $h$ is a function depending only on the position of the lamplighter, and not on the lamp configuration.
Thus, $h$ can be viewed as a sub-linear (in fact sub-logarithmic) harmonic function on $G$.
Since $G$ is recurrent, it implies that $h$ is constant.
\end{proof}

\subsection{Harmonic growth with $\Z^2$ base}\label{sec:construct log}

Complementing Theorem \ref{thm:no sub log on LL}, we show that 
when the base group is $\Z^2$ then the harmonic growth is logarithmic.
For simplicity of the presentation, 
we show this for the group $G :=(\Z/2\Z) \wr \Z^2$, other cases are similar.

Due to Theorem \ref{thm:no sub log on LL}, it suffices to construct a 
logarithmically growing non-constant harmonic function on $(\Z/2\Z) \wr \Z^2$.

Let $a: \Z^2 \to \R$ be the {\em potential kernel}, as defined in
\cite[\S 4.4]{Lawler_Limic_RW}.
That is, $a$ is harmonic on $\Z^2 \setminus \set{0}$ and $\tfrac14
\sum_{w \sim 0} a(w)=a(0)+1$. The standard normalization is $a(0)=0$,
but we will normalize it instead by $a(0)=\frac 12$.
Further, $a(x) = c \log |x| + O(1)$ as $|x| \to \infty$
for some constant $c>0$. 

We may now define our harmonic function. We define $h(\sigma,z) =
(-1)^{\sigma(0) } \cdot a(z)$ for any $(\sigma,z) \in G$. The logarithmic
growth is clear from the growth of $a$ so we only need to show that $h$
is indeed harmonic.

Recall that the neighbors of $(\sigma,z)$ in $L$ are 
$(\sigma, z \pm e_j)$ and $(\sigma^z , z)$ where 
$e_1 = (1,0) , e_2 = (0,1)$ 
and $\sigma^z$ is the configuration
$\sigma$ with the state of the lamp at $z$ flipped. Assume that the
random walk goes to $\sigma^z$ with probability $\frac 12$ and to each
of the neighbors in $\Z^2$ with probability $\frac 18$.

We have that if $z \neq 0$,
\begin{multline*} 
\tfrac12 h(\sigma^z , z) + \tfrac18 \sum_{w \sim z} h(\sigma , w) 
 = \tfrac12 (-1)^{\sigma(0) } \cdot \sr{ a(z)
+ \tfrac14 \sum_{w \sim z}  a(w)
} 
= (-1)^{\sigma(0)} a(z) = h(\sigma,z) ,
\end{multline*}
and if $z=0$,
\begin{multline*} 
\tfrac12 h(\sigma^0 , 0) + \tfrac18 \sum_{w \sim 0} h(\sigma , w) 
= \tfrac12 (-1)^{\sigma(0) } \cdot \sr{ -\tfrac 12
+ \tfrac14 \sum_{w \sim 0}  a(w)
}  = \tfrac12(-1)^{\sigma(0)} = h(\sigma,0) .
\end{multline*}
So $h$ is harmonic on $L$.

\section{Iterated wreath products}


\begin{prop}\label{prop:logloglog}
For any $k \geq 1$,
there exists a group $G$ such that $\har(G) \succeq \log$
and $H(X_n) \geq c(k)n/\log^{(k)}n$, where $(X_n)_n$ is a random walk on $G$.
\end{prop}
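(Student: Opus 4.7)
The plan is to define the tower $G_0 = \Z/2$ and inductively $G_k = G_{k-1} \wr \Z^2$, always equipped with the move-or-switch generating measure, and to establish two conclusions by induction on $k$: first, $\har(G_k) \succeq \log$; second, the entropy bound $H_{G_k}(n) \geq c(k)\, n/\log^{(k)} n$ for $k \geq 1$, where $H_L(n)$ denotes the entropy of the random walk on $L$ at time $n$.

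The harmonic growth statement follows by induction directly from Theorem \ref{thm:no sub log on LL}: the base $k=0$ is vacuous since $\Z/2$ is finite and supports no non-constant harmonic function, and the inductive step applies the theorem with lamp group $L = G_{k-1}$ (which satisfies the hypothesis by inductive assumption) and the recurrent base group $\Z^2$.

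For the entropy the base $k=1$ is the classical estimate $H_{(\Z/2) \wr \Z^2}(n) \asymp n/\log n$, which comes from the range growth $R_n \asymp n/\log n$ for simple random walk on $\Z^2$ together with the fact that most visited sites have a lamp close to uniform on $\{0,1\}$. For the inductive step, I would decompose the move-or-switch walk on $G_k = G_{k-1} \wr \Z^2$ as $X_n = (\omega_n, g_n)$ and let $\mathcal{G}$ be the $\sigma$-algebra generated by the base path $(g_0, \dots, g_n)$. Conditional on $\mathcal{G}$, the values $\omega_n(v)$ at distinct visited sites $v \in \Z^2$ are independent, and $\omega_n(v)$ has the law of a $G_{k-1}$-random walk of length $\mathrm{Binomial}(T_v, 1/2)$, where $T_v$ counts the visits of $g$ to $v$. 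Since the entropy of a random walk is monotone non-decreasing in the number of steps, conditioning reduces entropy gives
\[
H_{G_k}(n) \;\geq\; H(\omega_n \mid \mathcal{G}) \;\geq\; \E\Big[\sum_v H_{G_{k-1}}(T_v/4)\cdot \1{T_v \geq c \log n}\Big] .
\]
Standard local-time estimates for planar random walk (via the Green's function on a disk, or the Dvoretzky--Erd\H{o}s covering-time argument) show that with high probability a positive fraction of the $R_n \asymp n/\log n$ visited sites are visited at least $c \log n$ times. Combining with the inductive bound $H_{G_{k-1}}(m) \geq c(k-1)\, m / \log^{(k-1)} m$ applied at $m \asymp \log n$ yields
\[
H_{G_k}(n) \;\gtrsim\; \frac{n}{\log n} \cdot \frac{\log n}{\log^{(k)} n} \;=\; \frac{n}{\log^{(k)} n} ,
\]
which closes the induction.

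The main obstacle is the planar random-walk input, namely producing an expectation-level lower bound for $\sum_v \1{T_v \geq c \log n}$: one needs to know that with probability tending to $1$ a linear-in-$R_n$ portion of the range has visit count $\Theta(\log n)$. This is classical, but combining it with the Binomial thinning that governs how many of the visits at $v$ are actually lamp updates, and with the inductive entropy bound (checking that the monotonicity used to replace $\mathrm{Binomial}(T_v,1/2)$ by the deterministic lower bound $c \log n$ is harmless), requires some care so that positive constants $c(k)$ survive every inductive step.
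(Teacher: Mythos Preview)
Your proposal is correct and follows essentially the same route as the paper: define the iterated wreath products $G_k = G_{k-1}\wr\Z^2$, get $\har(G_k)\succeq\log$ by induction from Theorem~\ref{thm:no sub log on LL}, and for the entropy prove the recursion $H_{G_k}(n)\ge c\,\tfrac{n}{\log n}\,H_{G_{k-1}}(\log n)$ by conditioning on the base-walk data and using that $\asymp n/\log n$ sites are visited $\ge c\log n$ times.

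Two small remarks. First, your conditioning on the full base path $\mathcal{G}$ is actually cleaner than the paper's chain (which passes through $H((\sigma_n,K_n))$); both arrive at the same place. Second, once you condition on the full trajectory $(g_0,\dots,g_n)$ and the $\Z^2$ step measure is non-lazy, the number of lamp updates at each site $v$ is \emph{deterministic} (it equals the number of times the path stays put at $v$), not $\mathrm{Binomial}(T_v,1/2)$; equivalently, the paper phrases this as saying the lamp at $v$ is a \emph{lazy} $G_{k-1}$-walk of length $K_n(v)$. Either way the ``Binomial thinning'' issue you flagged disappears, and the remaining input is exactly the expectation bound $\E\big[|\{v:K_n(v)\ge \log n\}|\big]\ge c\,n/\log n$, which the paper also quotes as known.
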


\begin{proof}
Define $G_1 = (\Z/2\Z) \wr \Z^2$ and $G_{k+1} = G_k \wr \Z^2$.
On the one hand, Theorem \ref{thm:no sub log on LL} tells us that 
$\har(G_k) \succeq \log$ for all $k \geq 1$.

On the other hand, let $L$ be a group and $G = L \wr \Z^2$.
Let $H_G(n)$ be the entropy of the $n^{\textrm{th}}$ step of a random walk on $L$.
Then, for some constant $C>0$ 
(which depends only on the degree of the Cayley graph
chosen for $L$),
\begin{equation}\label{eq:bl}H_G(n) \geq c\frac{n}{\log n}H_L(\log n). 
\end{equation}
Indeed, let $(X_n = (\sigma_n,z_n))_n$ be a random walk on $G$.
For $z \in \Z^2$, let $K_n(z)$ be the number of times the lamplighter was at $z$ up to time $n$.
For each $z \in \Z^2$, $\sigma_n(z)$ is a lazy random walk on $L$ that has taken $K_n(z)$ steps. Thus, if $Y$ denotes the lazy random walk on $L$,
\begin{align*}
H_L(n)&=H((\sigma_n,z_n))\ge H((\sigma_n,K_n))\ge H(\sigma_n|K_n)
=\E\Big[\sum_{z\in\Z^2}H(Y_{K_n(z)})\Big]\\
&\ge \E\big[|\{z\in \Z^2:K_n(z)\ge \log n\}|\big] \cdot H_G(\log n).
\end{align*}
The walk $(z_n)$ is a lazy random walk on $\Z^2$. Known estimates
\cite{Lawler_Limic_RW} give that $\E\big[ |\{z\in \Z^2:K_n(z)\ge \log
  n\}| \big] > cn/\log n$ for some universal 
constant $c > 0$ small enough. Equation~\eqref{eq:bl} follows readily.

The estimate above enables us to relate $G_k$ to $G_{k-1}$. Iterating until $G_1$, we find 
that 
$$ H_{G_{k}}(n) > c^k \frac{n}{\log^{(k)} n} , $$
where as usual $\log^{(k)}$ is iteration of $\log$ $k$ times.
\end{proof}

We remark that in fact $H_{G_k}(n)<Cn/\log^{(k)}n$ as well, which can
be proved using the same calculation, bounding the error terms.

The contrapositive of the above is that if $f$ is some monotone function
such that for all groups $\har(G) \preceq f ( n/ H(X_n) )$, then 
$f$ must grow faster than $\exp \exp \cdots \exp n$ for any number of iterations of exponentials.

%
%
%
%


\section{Open Questions}

Let us list some of the many natural open problems that arise in the
context of unbounded harmonic functions on Cayley graphs (discrete groups).

\begin{enumerate}

\item A major open question is whether the Liouville property is a
  group property or not, or in other words, if it is independent of
  the choice of generators.
A generalization of this question is: 
Does the harmonic growth of the group depend on the finite generating set?
That is, given a group $G$ with $G = \IP{S} = \IP{S'}$ for finite symmetric sets $S,S'$,
is it true that the harmonic growth is the same for both Cayley graphs?

\item This paper only focuses on the smallest growing non-constant harmonic functions.
One may also consider larger growth harmonic functions.
It is well known that on $\Z^d$ the smallest non-constant harmonic functions are linear, and 
the second-smallest are quadratic (see e.g.\ \cite{LM13}).  
Do other groups (of non-polynomial volume growth) admit such a ``forbidden gap'' in the 
growth of non-constant harmonic functions? See 
\cite{A02, HJ12, MPTY15} for precise results in the case of polynomial growth.

\item Gromov's theorem \cite{G81} states that a group with polynomial
  growth is virtually nilpotent. A key ingredient in Kleiner's new
  proof of Gromov's theorem \cite{Kle10,T10} is the fact that on a
  group of polynomial volume growth, the space of Lipschitz harmonic
  functions is finite dimensional. The following question is therefore
  natural:

Let $G$ be a finitely generated group, and consider the space of Lipschitz
harmonic functions on $G$.  Suppose this space is finite dimensional.
Does it follow that $G$ is virtually nilpotent?


We remark that this cannot be deduced directly from Kleiner's proof. Kleiner's
proof contains an inductive step where one reduces the question to a
subgroup. The property of having polynomial growth can be carried from
a group to a subgroup, but for the property of having a
finite-dimensional space of harmonic functions, we do not know a-priori
if this carries over to a subgroup.

See \cite{MY15} for a treatment of this question in the solvable case.
Also related is Tointon's result characterizing virtually $\Z$ groups
as those with the space of all harmonic functions being finite dimensional, see
\cite{Tointon}.




Even if we cannot deduce that $G$ is virtually nilpotent, we
might still be able to deduce some properties of random walk on it. A
perhaps more tractable question would be:



Suppose $G$ has a finite-dimensional space of Lipschitz harmonic
functions. Does it follow that the random walk on $G$ is diffusive?


\item Another interesting question is to characterize those groups for which there do 
not exist sub-linear non-constant harmonic functions.
As noted above, all groups with polynomial volume growth are such, but 
also $(\Z / 2\Z) \wr \Z$. 

\end{enumerate}

\appendix

\section{Entropy Bound}

\subsection{Entropy}


%


For background on entropy see e.g.\ \cite{InformationTheory}.

Let $\mu,\nu$ be probability measures supported on a finite set $\O$.
Define
$$ H(\mu) = \sum_\o \mu(\o) \log \mu(\o) , $$
$$ D(\mu || \nu) = \sum_\o \mu(\o) \log ( \tfrac{\mu(\o)}{\nu(\o)} ) , $$
where $x \log \frac{x}{0}$ is interpreted as $\infty$
(so $D(\mu||\nu)$ is finite only if $\mu$ is absolutely continuous
with respect to $\nu$).
Let $P$ be a probability measure on $\O \times \O'$, 
(where $\O,\O'$ are finite) 
with marginal probability measures $\mu$ and $\nu$ on $\O$ and $\O'$ respectively.
Define
$$ I(\mu,\nu) = \sum_{(\o,\o') \in \O \times \O'} P(\o,\o') \log \sr{ \tfrac{ P(\o,\o') }{ \mu(\o) \nu(\o') } } . $$
($I$ depends on $P$ but we omit it in the notation).
If $X,Y$ are random variables in some probability space taking
finitely many values, then we define $H(X)$, $D(X||Y)$ and $I(X,Y)$
using the corresponding induced measures on the value space.

For two random variables $X$ and $Y$,
the \emph{conditional entropy} of $X$ conditioned on $Y$ is defined as $H(X|Y) = H(X,Y) - H(Y)$.
If $p(x,y)$ is the probability that $(X,Y) = (x,y)$ and $p(x|y) = p(x,y)/p(y)$, then
\begin{align*} 
H(X|Y) & = \E [ - \log p(X|Y) ] \\
& = - \sum_{y \ : \ p(y) > 0 } 
p(y) \sum_{x \ : \ p(x|y) > 0 } p(x|y) \log p(x|y) .
\end{align*}
It may also be easily checked that 
$$ I(X,Y) = H(X) - H(X|Y) = H(X) + H(Y) - H(X,Y) . $$

\begin{lem}
\label{prop:ineq for X,Y}
Let $X$ and $Y$ be two random variables on some probability space, taking finitely many values.
Let $f$ be some real valued function defined on the range of $X$ and $Y$.
Then,
\begin{align*}
(\E[f(X)] - \E[f(Y)])^2
& \leq 2 D(X||Y)  \cdot ( \E [ f(X)^2 ] + \E[ f(Y)^2 ] ) . 
\end{align*}
\end{lem}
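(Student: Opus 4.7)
The plan is to write $\E[f(X)] - \E[f(Y)] = \sum_\o f(\o)(\mu(\o) - \nu(\o))$, where $\mu,\nu$ denote the laws of $X,Y$, and then apply Cauchy--Schwarz after the algebraic factoring $\mu(\o)-\nu(\o) = (\sqrt{\mu(\o)}+\sqrt{\nu(\o)})(\sqrt{\mu(\o)}-\sqrt{\nu(\o)})$. Cauchy--Schwarz then yields
\[
(\E[f(X)] - \E[f(Y)])^2 \le \br{\sum_\o f(\o)^2 (\sqrt{\mu(\o)}+\sqrt{\nu(\o)})^2}\cdot \br{\sum_\o (\sqrt{\mu(\o)}-\sqrt{\nu(\o)})^2}.
\]
The second factor is precisely the squared Hellinger distance $H^2(\mu,\nu)$ between the two laws, so the problem reduces to estimating the two factors separately.

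For the first factor I would use the pointwise bound $(\sqrt{\mu(\o)}+\sqrt{\nu(\o)})^2 \le 2(\mu(\o)+\nu(\o))$, which is just $(a+b)^2\le 2(a^2+b^2)$; summing against $f(\o)^2$ produces the desired upper bound $2(\E[f(X)^2]+\E[f(Y)^2])$. The lemma is thereby reduced to the classical Hellinger--KL comparison $H^2(\mu,\nu) \le D(\mu || \nu)$.

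This last inequality is a one-line Jensen calculation. Let $B := \sum_\o \sqrt{\mu(\o)\nu(\o)}$ denote the Bhattacharyya coefficient; rewriting $B = \E_\mu[\sqrt{\nu/\mu}]$ and applying Jensen's inequality to the convex function $-\log$ gives $-\log B \le \tfrac12 D(\mu || \nu)$, so that $B \ge \exp(-\tfrac12 D(\mu || \nu))$. Combining this with the identity $H^2(\mu,\nu) = 2-2B$ and the elementary bound $1-e^{-x} \le x$ yields $H^2(\mu,\nu)\le D(\mu || \nu)$, completing the proof. I do not anticipate any serious obstacle: the only minor point is that when $\mu$ is not absolutely continuous with respect to $\nu$ one has $D(\mu || \nu) = \infty$ and the inequality is trivial, so one may assume absolute continuity throughout and treat all ratios $\sqrt{\nu/\mu}$ as well-defined on the support of $\mu$.
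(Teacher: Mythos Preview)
Your proof is correct. Both arguments start the same way---write $\E[f(X)]-\E[f(Y)]=\sum_\o f(\o)(\mu(\o)-\nu(\o))$ and apply Cauchy--Schwarz after a suitable factoring of $\mu(\o)-\nu(\o)$---but diverge in the choice of factoring. The paper writes $\mu-\nu = \dfrac{\mu-\nu}{\sqrt{\mu+\nu}}\cdot\sqrt{\mu+\nu}$, which produces the Le\,Cam-type distance $\dbtv(X,Y)=\sum_\o \dfrac{(\mu(\o)-\nu(\o))^2}{\mu(\o)+\nu(\o)}$ and then bounds $\dbtv\le 2D$ via the pointwise inequality $\xi\log\xi-\xi+1\ge \dfrac{(\xi-1)^2}{2(1+\xi)}$. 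You instead use the difference-of-squares factoring, which produces the squared Hellinger distance and reduces to the classical bound $H^2\le D$, dispatched by Jensen. The two intermediate distances are comparable ($H^2\le \dbtv\le 2H^2$), so neither route is materially stronger; your Jensen step is arguably cleaner than the Taylor-remainder estimate, while the paper's factoring avoids the extra loss $(\sqrt{\mu}+\sqrt{\nu})^2\le 2(\mu+\nu)$ that you incur and recover later. Either way the final constant is the same.
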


\begin{proof}
Define the following distance between variables
$$ \dbtv(X,Y) = \sum_z \frac{ (\Pr[X=z] - \Pr[Y=z])^2 }{ \Pr [X=z] + \Pr[Y=z] } . $$
If there exists $z$ such that $\Pr[X=z] > \Pr[Y=z] = 0$, then $D(X||Y) = \infty$ and there is nothing to prove.
Let us now assume that
for all $z$, $\Pr[Y=z] = 0$ implies $\Pr[X=z] = 0$. Hence, we can always write 
$p(z) := \Pr[X=z] / \Pr[Y=z]$ and
$$ \dbtv(X,Y) = \sum_z \Pr[Y=z] \cdot \frac{ (1- p(z))^2 }{1 + p(z) } . $$
Consider the function
$f(\xi) = \xi \log \xi$ (with $f(0)=0$).
We have that $f'(\xi) = \log \xi + 1, f''(\xi) = 1/\xi$.
Thus, expanding around $1$, we find that for all $\xi>0$,
$ \xi \log \xi - \xi + 1 \geq \tfrac{(\xi-1)^2}{2(1+\xi)} . $
This implies
$$ \dbtv(X,Y) \leq 2 \sum_z \Pr[Y=z] ( 1- p(z) ) + 2 D(X||Y) = 2 D(X||Y) . $$
Using Cauchy-Schwartz, one obtains
\begin{align*}
|\E[f(X)] - \E[f(Y)]| &  = \sum_z | \Pr[X=z] - \Pr[Y=z] | \cdot |f(z)|  \\
& \leq \sqrt{  \dbtv(X,Y) } \cdot \sqrt{ \E [ f(X)^2 ] + \E[ f(Y)^2 ] }  \\
& \leq \sqrt{  2 D(X||Y) } \cdot \sqrt{ \E [ f(X)^2 ] + \E[ f(Y)^2 ] }
. \qedhere
\end{align*}
\end{proof}

\begin{cor}
\label{cor:ineq for X|Y}
Let $X,Y$ be random variables on some probability space, taking finitely many values.
Let $f$ be some real valued function on the range of $X$. Then,
$$ \E \left[\Big|\E[f(X) | Y] - \E[f(X)] \Big|\right]  \leq 2 \sqrt{I(X,Y)}\sqrt{\E[f(X)^2]} . $$
\end{cor}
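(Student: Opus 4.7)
The plan is to apply Lemma \ref{prop:ineq for X,Y} pointwise for each value $y$ of $Y$, comparing the conditional law of $X$ given $Y=y$ against the marginal law of $X$, and then to integrate over $Y$ using Cauchy--Schwarz together with the standard identity
$$ I(X,Y) = \sum_y \Pr[Y=y]\, D\bigl(X\mid Y=y \,\big\|\, X\bigr), $$
which is an immediate unfolding of the definition $I(X,Y)=\sum_{x,y} p(x,y)\log\tfrac{p(x\mid y)}{p(x)}$ by conditioning on $y$.

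Concretely, for each $y$ with $\Pr[Y=y]>0$, let $X_y$ denote a random variable with the conditional law of $X$ given $Y=y$. Since $\Pr[X=x]=0$ forces $\Pr[X=x,\,Y=y]=0$, the law of $X_y$ is absolutely continuous with respect to the law of $X$, so Lemma \ref{prop:ineq for X,Y} applied to $X_y$ and $X$ gives
$$ \bigl(\E[f(X)\mid Y=y] - \E[f(X)]\bigr)^{2} \le 2\, D(X_y \,\|\, X)\cdot\bigl(\E[f(X)^{2}\mid Y=y] + \E[f(X)^{2}]\bigr). $$
Taking square roots, averaging over $Y$, and applying Cauchy--Schwarz in the form $\E[UV]\le \sqrt{\E[U^{2}]\,\E[V^{2}]}$ with $U=\sqrt{2 D(X_Y\|X)}$ and $V=\sqrt{\E[f(X)^{2}\mid Y]+\E[f(X)^{2}]}$ yields
$$ \E\bigl|\E[f(X)\mid Y] - \E[f(X)]\bigr| \le \sqrt{2\,\E_Y\bigl[D(X_Y \,\|\, X)\bigr]}\cdot\sqrt{\E_Y\bigl[\E[f(X)^{2}\mid Y] + \E[f(X)^{2}]\bigr]}. $$
The first factor is exactly $\sqrt{2\,I(X,Y)}$ by the identity above. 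The second factor equals $\sqrt{2\,\E[f(X)^{2}]}$ by the tower property. Multiplying gives $\;2\sqrt{I(X,Y)\,\E[f(X)^{2}]}$, as required.

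There is essentially no substantive obstacle: the only mildly delicate points are being careful to pull the expectation inside the square root via Cauchy--Schwarz rather than via the (false) inequality $\E\sqrt{\cdot}\le\sqrt{\E[\cdot]}$ in the wrong direction, and recognizing the mutual information identity $I(X,Y)=\E_Y[D(X\mid Y\,\|\,X)]$, which is standard and already implicit in the definitions recalled in this appendix.
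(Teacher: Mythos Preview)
Your proof is correct and follows essentially the same route as the paper: apply Lemma \ref{prop:ineq for X,Y} to the conditional law $X_y$ against the marginal law of $X$, take square roots, average over $Y$, and use Cauchy--Schwarz together with the identity $I(X,Y)=\sum_y \Pr[Y=y]\,D(X_y\,\|\,X)$ and the tower property $\E_Y\bigl[\E[f(X)^2\mid Y]\bigr]=\E[f(X)^2]$. Your explicit remark that $X_y$ is absolutely continuous with respect to $X$ is a useful clarification the paper omits.
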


\begin{proof}
Define $X|y$ to be the random variable whose density is $\Pr[X|y\, = x] = \Pr[X=x | Y=y ]$. By Lemma \ref{prop:ineq for X,Y} applied to $X$ and $(X|y)$, 
\begin{align*} \big|\E[f(X| y)] - \E[f(X)] \big|^2 &\leq 2 D\Big(X|y \Big|\!\Big| X\Big) \cdot( \E [ (f(X|y)^2 ] + \E[ f(X)^2] )\\ 
&\leq 2 D\Big(X|y \Big|\!\Big| X\Big) \cdot( \E [ f(X|y)^2] + \E[ f(X)^2] ).\end{align*}
Summing (after weighting by $\Pr[Y=y]$) the previous equation for every $y$, and observing that
$$ I(X,Y) = \sum_y \Pr[Y=y] D(X|y || X) $$
and $\sum_y \Pr[Y=y]\E[f(X|y)^2]=\E[f(X)^2]$,
the Cauchy-Schwarz inequality implies
\begin{align*}
\E \big|\E[f(X) &| Y] - \E[f(X)] \big| = \sum_y \Pr [Y=y] \cdot \big| \E[ f(X|y) ] - \E [ f(X) ] \big| 
\\
& \leq \sum_y \Pr [Y=y ] \sqrt{ 2 D\big(X|y \big|\!\big| X\big) \cdot ( \E[ f(X|y)^2 ] + \E[ f(X)^2 ] ) } \\
& \leq \sqrt{2} \sqrt{ I(X,Y) } \cdot \sqrt{ 2 \E[f(X)^2 ] } .\qedhere
\end{align*}
\end{proof}

The next proposition is our main tool, relating harmonic functions
and the incremental entropy of a random walk.

\begin{prop}
\label{lem:harmonic function bound}
Let $G$ be a group.
Let $(X_n)_{n \geq 0}$ be a random walk on $G$.
Let $h:G \to \R$ be a harmonic function.
Then,
\begin{align*}
(\E_z  |h(X_1) - h(z)| )^2 & \leq 4 \E_z [ |h(X_n) - h(z)|^2 ] \cdot (H(X_n) - H(X_{n-1}) ) .
\end{align*}
\end{prop}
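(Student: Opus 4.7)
The plan is to recast the difference $h(X_1)-h(z)$ as a difference of conditional expectations and then apply Corollary \ref{cor:ineq for X|Y} directly. By harmonicity of $h$ and the Markov property, for every $n\ge 1$ we have
\[
\E_z[h(X_n)\mid X_1] = h(X_1) \quad\text{and}\quad \E_z[h(X_n)] = h(z).
\]
Setting $f(g) := h(g) - h(z)$, these two identities give
\[
h(X_1) - h(z) = \E_z[f(X_n)\mid X_1] - \E_z[f(X_n)].
\]

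Second, I apply Corollary \ref{cor:ineq for X|Y} with $X=X_n$ and $Y=X_1$ (both take only finitely many values, since the step measure is finitely supported). Combined with the identity above, this yields
\[
\E_z\bigl|h(X_1)-h(z)\bigr| \;\le\; 2\sqrt{I(X_n,X_1)}\cdot\sqrt{\E_z\bigl[|h(X_n)-h(z)|^2\bigr]}.
\]
Squaring produces exactly the inequality in the proposition, provided we can identify $I(X_n,X_1) = H(X_n)-H(X_{n-1})$.

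Third, I verify this entropy identity. Writing $I(X_n,X_1)=H(X_n)-H(X_n\mid X_1)$, it is enough to show $H(X_n\mid X_1)=H(X_{n-1})$. This is the only place where the group structure really enters: conditioned on $X_1=x$, the Markov property says that $X_n$ has the same law as $x\cdot Y_{n-1}$ where $Y_{n-1}$ is an independent copy of $X_{n-1}$ started from $1_G$. Since entropy is invariant under left-multiplication by a fixed group element, $H(X_n\mid X_1=x)=H(X_{n-1})$ for every $x$, and averaging over $x$ gives the claim.

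There is no real obstacle in this argument; the main content is packaged into Corollary \ref{cor:ineq for X|Y} (which in turn rests on Pinsker-type bounds between total variation, chi-square distance, and KL divergence handled in Lemma \ref{prop:ineq for X,Y}). The only point requiring any care is the entropy identity $H(X_n\mid X_1)=H(X_{n-1})$, which is immediate from the translation invariance of entropy combined with the Markov property of the random walk on the group.
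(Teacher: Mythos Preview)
Your proof is correct and follows essentially the same route as the paper: you use harmonicity to rewrite $h(X_1)-h(z)$ as $\E_z[f(X_n)\mid X_1]-\E_z[f(X_n)]$ with $f=h-h(z)$, apply Corollary~\ref{cor:ineq for X|Y}, and then identify $I(X_n,X_1)=H(X_n)-H(X_{n-1})$ via $H(X_n\mid X_1)=H(X_{n-1})$. Your justification of the last identity via translation invariance of entropy is slightly more explicit than the paper's one-line appeal to transitivity, but the argument is the same.
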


\begin{proof}
Since $h$ is harmonic we have that
$| h(X_1) - h(z) |  = | \E_z [ h(X_n) | X_1 ] - \E_z [ h(X_n) ] |$.
Using Corollary \ref{cor:ineq for X|Y} 
(with $X$ being $X_n$, $Y$ being $X_1$  and $f(x)=h(x) - h(z)$),
we find that
$$ \E_z |h(X_1) - h(z)| \leq 2 \cdot \sqrt{I(X_n,X_1) } \cdot \sqrt{ \E_z [ (h(X_n) - h(z))^2 ]  } . $$
Since $G$ is transitive, we have that $H(X_n|X_1) = H(X_{n-1})$.  Thus,
\begin{align*} 
I(X_n,X_1) &= H(X_n)+H(X_1)-H(X_n,X_1)\\
&=H(X_n) - H(X_n|X_1) = H(X_n) - H(X_{n-1}) ,
\end{align*}
which implies the claim readily.\end{proof}

Our inequality actually provides a quantitative estimate on the growth of 
harmonic functions, which quantifies the above direction of the Kaimanovich-Vershik
criterion.

\begin{thm}
\label{thm:entropy bound}
Let $G$ be a group.
Let $(X_n)_{n \geq 0}$ be a random walk on $G$.
Then,
$$ \har(G) \succeq \sqrt{ n / H(X_n) } . $$
\end{thm}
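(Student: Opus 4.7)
The plan is to apply Proposition \ref{lem:harmonic function bound} at every time $n$ and then telescope, using the monotonicity of $M_h$ to convert the resulting sum into a bound on $M_h(N)$. Let $h$ be a non-constant harmonic function on $G$. Because $\supp \mu$ generates $G$, connectedness of the Cayley graph forces the existence of some $z_0 \in G$ and $s \in \supp\mu$ with $h(z_0 s) \neq h(z_0)$, whence
$$c_h := \E_{z_0}|h(X_1) - h(z_0)| > 0.$$
Replacing $h$ by its left translate $x \mapsto h(z_0 x)$ (which is again non-constant and harmonic, and which only shifts the basepoint of $M_h$, an operation that preserves the asymptotic growth rate), I may assume $z_0 = 1_G$.

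Next I want to bound the second moment appearing in Proposition \ref{lem:harmonic function bound} by $M_h(n)^2$. Since $\mu$ is supported on the generating set $S$, the walk satisfies $|X_n| \leq n$ almost surely, so $|h(X_n) - h(1_G)| \leq M_h(n)$. Applying Proposition \ref{lem:harmonic function bound} with $z = 1_G$ then gives, for every $n \geq 1$,
$$c_h^2 \;\leq\; 4 \, M_h(n)^2 \, \bigl(H(X_n) - H(X_{n-1})\bigr).$$

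To conclude, I sum this inequality from $n = 1$ to $N$. Using the monotonicity of $M_h$ and the fact that $H(X_0) = 0$, the sum on the right telescopes:
$$N c_h^2 \;\leq\; 4 \sum_{n=1}^N M_h(n)^2 \bigl(H(X_n) - H(X_{n-1})\bigr) \;\leq\; 4 \, M_h(N)^2 \sum_{n=1}^N \bigl(H(X_n) - H(X_{n-1})\bigr) \;=\; 4 \, M_h(N)^2 \, H(X_N).$$
Rearranging yields $M_h(N) \geq (c_h/2)\sqrt{N / H(X_N)}$, which is precisely the required bound $\har(G) \succeq \sqrt{n/H(X_n)}$.

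The only genuinely delicate point is the initial reduction to a basepoint where the one-step fluctuation $\E_z|h(X_1) - h(z)|$ is strictly positive; without it, applying Proposition \ref{lem:harmonic function bound} at the identity could give the trivial bound $0 \leq \ldots$ even for a non-constant $h$. Everything after that reduction is elementary: a uniform pointwise upper bound on $|h(X_n) - h(1_G)|$ in terms of $M_h(n)$, combined with the telescoping of entropy increments into $H(X_N)$.
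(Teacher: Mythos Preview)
Your proof is correct and follows the same overall strategy as the paper: apply Proposition \ref{lem:harmonic function bound} and convert the entropy increment $H(X_n)-H(X_{n-1})$ into a bound involving $H(X_n)/n$. The execution differs slightly. The paper first proves that the increments $H(X_k)-H(X_{k-1})$ are \emph{decreasing} in $k$ (via $H(X_k)-H(X_{k-1}) = H(X_1)-H(X_1\mid X_k)$), so that a single application of the proposition at time $n$ already gives $c_h^2 \le 4 M_h(n)^2 H(X_n)/n$. You instead apply the proposition at every $n\le N$ and telescope, which only requires the weaker fact that the increments are \emph{nonnegative}. That is a small simplification, but note that you are using $H(X_n)\ge H(X_{n-1})$ when you pull $M_h(N)^2$ out of the sum; you should state this explicitly (it follows from $H(X_n)\ge H(X_n\mid X_1)=H(X_{n-1})$ by transitivity, the same identity the paper uses). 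Otherwise the argument is complete.
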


\begin{proof}
Note that by the Markov property, for any $k$,
$$ H(X_1|X_k) = H(X_1|X_{k} , X_{k+1} , \ldots ) . $$
Thus,
$$ H(X_k) - H(X_{k-1}) = H(X_k) - H(X_k|X_1) = H(X_1) - H(X_1|X_k) $$
is a decreasing sequence in $k$.
Thus,
\begin{align*}
H(X_n) & = \sum_{k=1}^n H(X_k) - H(X_{k-1}) \geq n \cdot \big( H(X_n) - H(X_{n-1}) \big) .
\end{align*}

Let $h:G \to \R$ be a non-constant harmonic function.
Let $x \sim y$ be vertices such that $h(x) \neq h(y)$.
By Lemma \ref{lem:harmonic function bound}
$$ (\E_x  |h(X_1)-h(x) | )^2 \leq 4 \E_x [ h(X_n)^2 ] H(X_n) / n \leq 4 M_h(x,n)^2 H(X_n) / n . $$
Since $\E_x  |h(X_1) - h(x)|  $ is a positive constant, we have that $M_h(x,n) \succeq \sqrt{n / H(X_n)}$.
\end{proof}

\begin{cor}[Avez, Kaimanovich-Vershik \cite{KV83}]
Let $G$ be a group.
Let $(X_n)_{n \geq 0}$ be a random walk on $G$.
If $H(X_n)/n$ tends to 0, 
then $G$ is Liouville (meaning that any bounded harmonic function is constant).
\end{cor}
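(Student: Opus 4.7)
The plan is to deduce the corollary directly from Theorem \ref{thm:entropy bound}, which has already been established in the appendix. The theorem asserts $\har(G) \succeq \sqrt{n/H(X_n)}$, meaning that every non-constant harmonic function $h$ on $G$ satisfies $M_h(n) \geq c \sqrt{n/H(X_n)}$ for some constant $c = c(h) > 0$ and all large $n$. This is a lower bound on the growth of any non-constant harmonic function, expressed in terms of the entropy profile of the random walk.

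First I would argue by contraposition: assume that $G$ admits a non-constant bounded harmonic function $h$, and aim to show that $H(X_n)/n$ does not tend to $0$. Since $h$ is bounded, say $|h| \leq B$, the growth function satisfies $M_h(n) \leq 2B$ for every $n$. On the other hand, since $h$ is non-constant there exist neighbors $x \sim y$ with $h(x) \neq h(y)$, so $h$ is non-constant in the sense required by Theorem \ref{thm:entropy bound}, and we obtain
\[
2B \geq M_h(n) \geq c \sqrt{n / H(X_n)}
\]
for some $c > 0$ and all sufficiently large $n$.

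Rearranging this inequality gives $H(X_n)/n \geq c^2/(4B^2)$ for all large $n$, so $\liminf_n H(X_n)/n > 0$. This contradicts the assumption $H(X_n)/n \to 0$, and hence no non-constant bounded harmonic function can exist, i.e.\ $G$ is Liouville. There is essentially no obstacle here beyond correctly invoking Theorem \ref{thm:entropy bound}; the work has already been done in proving Proposition \ref{lem:harmonic function bound} and deriving the quantitative bound. The only minor subtlety is to note that the constant in $\har(G) \succeq \sqrt{n/H(X_n)}$ may depend on the chosen $h$ (through the value $\E_x|h(X_1) - h(x)|$ at a fixed pair $x \sim y$), but this is a positive constant depending only on $h$, which is all we need for the contradiction.
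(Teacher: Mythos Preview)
Your proposal is correct and is exactly the intended deduction: the paper gives no separate proof for this corollary, treating it as an immediate consequence of Theorem \ref{thm:entropy bound}, and you have simply written out that implication. The only cosmetic point is that the proof of Theorem \ref{thm:entropy bound} actually bounds $M_h(x,n)$ at a particular vertex $x$ rather than $M_h(n)=M_h(1_G,n)$, but for a bounded $h$ one has $M_h(x,n)\le 2B$ just the same, so your argument goes through unchanged.
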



\begin{rem}\label{rem:nilpotent}
One can also use Proposition \ref{lem:harmonic function bound} to show that for groups
with polynomial growth, the
harmonic growth is linear.  
This is done (in a slightly different context) in \cite{BDKY11}.
Since it is so short, let us repeat the argument here.

The only required fact is that for a group of polynomial growth the random 
walk is diffusive.  So if $h$ is a sub-linear harmonic function
then $\E_x [ h(X_n)^2 ] = o(n)$ as $n \to \infty$.
Since the group is of polynomial growth, $H(X_n) = O(\log n)$, and so 
there are infinitely many $n$ for which $H(X_n) - H(X_{n-1}) = O(n^{-1})$.
Along this infinite sequence of $n$, we have by Lemma \ref{lem:harmonic function bound}
$$ (\E_x |h(X_1) - h(x) | )^2 \leq o(n) \cdot O(n^{-1}) = o(1) , $$
so $h(X_1) = h(x)$ a.s.  Since this holds for all $x$, it must be that $h$ is constant.
\end{rem}

%
%
%
%
%
%
%
%
%



\end{document}